\newcommand{\dR}{\mathbb{R}}
\newcommand{\dE}{\mathbb{E}}
\newcommand{\cN}{\mathcal{N}}
\newcommand{\cF}{\mathcal{F}}
\newcommand{\veps}{\varepsilon}
\newcommand{\rI}{\mathrm{I}}
\newcommand{\wh}{\widehat}
\newcommand{\ind}{\mbox{1}\kern-.25em \mbox{I}}
\font\calcal=cmsy10 scaled\magstep1
\def\build#1_#2^#3{\mathrel{\mathop{\kern 0pt#1}\limits_{#2}^{#3}}}
\def\liml{\build{\longrightarrow}_{}^{{\mbox{\calcal L}}}}
\def\videbox{\mathbin{\vbox{\hrule\hbox{\vrule height1ex \kern.5em
\vrule height1ex}\hrule}}}
\numberwithin{equation}{section}
\theoremstyle{plain}
\newtheorem{thm}{Theorem}[section]
\newtheorem{rem}{Remark}[section]
\newtheorem{lem}{Lemma}[section]
\email{philippe.fraysse@math.u-bordeaux1.fr}
\keywords{estimation of the shift parameter, asymptotic properties}
\subjclass[2010]{Primary:  62G05, Secondary: 62G20}
\begin{document}
\title[Estimation of the shift parameter in regression models]
{Estimation of the shift parameter in regression models with unknown distribution of the observations}
\author{Philippe Fraysse}
\dedicatory{\normalsize Universit\'e de Bordeaux}
\address{Universit\'e Bordeaux 1, Institut de Math\'ematiques de Bordeaux, UMR CNRS 5251,
	351 cours de la lib\'eration, 33405 Talence cedex, France.}

\begin{abstract}
This paper is devoted to the estimation of the shift parameter in a semiparametric regression model when the distribution of the observation times is unknown. Hence, we propose to use a stochastic algorithm which takes
into account the estimation of the distribution of the observation times. We establish the almost sure convergence
of our estimator and the asymptotic normality. The main result of the paper is that, with little assumptions on the regularity of the regression function, the asymptotic variance obtained is the same as when the distribution is known. In that sense, we improve the recent work of Bercu and Fraysse \cite{BF10}.
\end{abstract}

\maketitle

%%%%%%%%%%%%%%%%%%%%%%%%%%%%%%%%%%%%%%%%%%%%%%%%%%%%%%%%%%%%%%%%%%%%%%%%%%%%%%%%

\section{INTRODUCTION}

%%%%%%%%%%%%%%%%%%%%%%%%%%%%%%%%%%%%%%%%%%%%%%%%%%%%%%%%%%%%%%%%%%%%%%%%%%%%%%%%
\noindent We propose to study the problem of the estimation of the shift parameter $\theta$ in the semi parametric regression model defined, for all $n\geq0$, by
\begin{equation}
\label{Sempar}
Y_{n}=f(X_{n}-\theta)+\varepsilon_{n},
\end{equation}
where $(X_n)$ and $(\veps_n)$ are two independent sequences of independent and identically distributed random variables.
 Model \eqref{Sempar} belongs to the family of shape invariant models introduced by Lawton \textit{et al.} \cite{Lawton}. One can find studies of that kind of models in the papers of Dalalyan \textit{et al.} \cite{DGT06}, of Gamboa \textit{et al.} \cite{GamboaLoubes07} or Vimond \cite{Vimond10}, whereas Castillo and Loub\`es \cite{CastilloLoubes09} and Trigano \textit{et al.} \cite{Trigano11} are interesting of such a model when the parameter $\theta$ is random. Recent advances on the subject have also provided by Bigot and Charlier \cite{BC11} and Bigot and Gendre \cite{BG10}.\\ 
Contrary to all the papers quoted previously, we are dealing with random observation times $(X_{n})$ and we assume that their distribution is unknown. 
Our goal is the estimation of $\theta$ in that case. More precisely, we propose to generalize the work of Bercu and Fraysse \cite{BF10} when the distribution of $(X_{n})$ is assumed to be known.
We implement a stochastic algorithm in order to estimate the unknown parameter $\theta$
without any preliminary evaluation of the regression function $f$. 
When the distribution of $(X_{n})$ is known, Bercu and Fraysse propose to use the algorithm similar to that of Robbins-Monro \cite{RobbinsMonro51}, defined, for all $n\geq0$, by
\begin{equation}
\label{RMalgogknown}
\widetilde{\theta}_{n+1}=\widetilde{\theta}_{n}+\gamma_{n}T_{n+1}
\end{equation}
where $(\gamma_n)$ is a positive sequence of real numbers decreasing towards zero and
$(T_n)$ is a sequence of random variables such that 
$\dE[T_{n+1}|\cF_n]=\phi(\widehat{\theta}_{n})$
where $\mathcal{F}_{n}$ stands for the $\sigma $-algebra of the events occurring up to time $n$. References on algorithm \eqref{RMalgogknown} can be found in \cite{BF10}.
Nevertheless, the expression of $T_{n+1}$ depends on the distribution of $(X_{n})$. To overcome this problem, we propose to replace the algorithm given by \eqref{RMalgogknown} by the one defined, for all $n\geq0$, by
\begin{equation}
\label{RMalgo}
\widehat{\theta}_{n+1}=\widehat{\theta}_{n}+\gamma_{n}\wh{T}_{n+1},
\end{equation}
where $\wh{T}_{n+1}$ depends only on an estimator of the distribution of $(X_{n})$ which will be explicited in the sequel. In particular, we no longer have  $\dE[\wh{T}_{n+1}|\cF_n]=\phi(\widehat{\theta}_{n})$. Algorithms of the form \eqref{RMalgo} have been studied by
Pelletier \cite{Pelletier98}, \cite{Pelletier298} where the author establishes convergence results under the hypothesis that $\left(\wh{T}_{n+1}-T_{n+1}\right)^2=o_{\mathbb{P}}(\gamma_{n})$. Nevertheless, in our situation, such an hypothesis is not verified and we can not apply this kind of convergence results.\\
The paper is organized as follows. Section 2 is devoted to the explanation of the estimation procedure of $\theta$. We establish the almost sure convergence of $\widehat{\theta}_{n}$ 
as well as the asymptotic normality under some little assumptions on the regularity of $f$. In particular, we establish that the asymptotic variance is the same as the one obtained in the paper \cite{BF10}, that is to say the estimation of the distribution of $(X_{n})$ does not disturb the asymptotic behaviour of $\widehat{\theta}_{n}$.
The proofs of the results are given is Section 3. 

%%%%%%%%%%%%%%%%%%%%%%%%%%%%%%%%%%%%%%%%%%%%%%%%%%%%%%%%%%%%%%%%%%%%%%%%%%%%%%%%

\section{ESTIMATION PROCEDURE AND MAIN RESULTS}

%%%%%%%%%%%%%%%%%%%%%%%%%%%%%%%%%%%%%%%%%%%%%%%%%%%%%%%%%%%%%%%%%%%%%%%%%%%%%%%%

\noindent 
We focus our attention on the estimation of the shift parameter $\theta$ in the semiparametric regression model given
by \eqref{Sempar}. We assume that $(\veps_n)$ is a sequence of independent and identically distributed random variables
with zero mean and unknown positive variance $\sigma^2$. Moreover, we add the two several hypothesis similar to that of
\cite{BF10}.
%\vspace{-1ex}
%%
\begin{displaymath}
\begin{array}{ll}
(\mathcal{H}_1) & \textrm{The observation times $(X_n)$ are independent and identically distributed with}\\
& \textrm{unknown probability density function $g$, positive on its support $[-1/2,1/2]$.} \\
& \textrm{In addition, $g$ is continuous, twice differentiable with  bounded derivatives. }\\
& \textrm{We denote by $C_{g}>0$ the minimum of $g$ on $[-1/2,1/2]$. } \\
(\mathcal{H}_2) & \textrm{The shape function $f$ is symmetric, bounded, periodic with period 1}. 
 \end{array}
\end{displaymath}
When the density $g$ is known, Bercu and Fraysse \cite{BF10} propose to use the algorithm defined, for all $n\geq0$, by
\begin{equation}
\label{RMA}
\widetilde{\theta}_{n+1}=\pi_{C}\Bigl(\widetilde{\theta}_{n}+\text{sign}(f_1)\gamma_{n+1}T_{n+1}\Bigr),
\end{equation}
where the initial value $\widetilde{\theta}_{0} \in C$ and the random variable $T_{n+1}$ is defined by
\begin{equation*}
\label{DefT}
T_{n+1}=\frac{\sin(2\pi(X_{n+1}-\widetilde{\theta}_{n}))}{g(X_{n+1})}Y_{n+1}.
\end{equation*}
We recall that $\pi_{C}$ is the projection on the compact set $C=[-1/4;1/4]$ defined, for all $x\in \dR$, by
\begin{eqnarray*}
   \pi_{C}(x) = \left \{ \begin{array}{lll}
   \,\, \,x & \ \text{ if } \ |x|\leq 1/4, \vspace{1ex} \\
    \,1/4 & \ \text{ if } \ x\geq 1/4,  \vspace{1ex} \\
    \!\!-1/4 & \ \text{ if } \ x\leq -1/4. \\
   \end{array} \nonumber \right.
\end{eqnarray*} 
Moreover, we denote by the first Fourier coefficient of $f$
$$ f_1=\int_{-1/2}^{1/2}\cos(2\pi x)f(x)\,dx ,$$
and we define the function $\phi$, for all $x\in{\mathbb{R}}$, by
\begin{equation}
\label{deffunctionphi}
\phi(x)=\sin\left(2\pi\left(\theta-x\right)\right)f_{1}.
\end{equation}
Finally, $(\gamma_n)$ is a decreasing sequence of positive real numbers satisfying
\begin{equation}
\label{hypgamma}
\sum_{n=1}^\infty\gamma_{n}=+\infty
\hspace{1cm}\text{and}\hspace{1cm}
\sum_{n=1}^\infty\gamma_{n}^2<+\infty.
\end{equation}
When the density $g$ is unknown, it is not possible to use algorithm \eqref{RMA}. The idea is to replace $g$ in the expression of \eqref{RMA} by an estimator of $g$. More precisely, we study the algorithm defined, for all $n\geq0$, by
\begin{equation}
\label{RMAg}
\wh{\theta}_{n+1}=\pi_{C}\Bigl(\wh{\theta}_{n}+\text{sign}(f_1)\gamma_{n+1}\wh{T}_{n+1}\Bigr),
\end{equation}
with
\begin{equation}
\label{DefTg}
\wh{T}_{n+1}=\frac{\sin(2\pi(X_{n+1}-\wh{\theta}_{n}))}{\wh{g}_{n}(X_{n+1})}Y_{n+1}.
\end{equation}
where $\wh{g}_{n}$ is the Parzen-Rosenblatt kernel estimator of $g$ (see \cite{Tsybakov04}, \cite{WolvertonWagner} and \cite{Yamato71} for references) defined, for all $x\in{[-1/2;1/2]}$ and for all $n\geq0$, by
\begin{equation}
\label{PRrecursif}
\wh{g}_{n}(x)=\frac{1}{n}\sum_{i=1}^{n}\frac{1}{h_{i}}K\left(\frac{X_{i}-x}{h_{i}}\right).
\end{equation}
and where the kernel $K$ is a symmetric function, positive, with compact support and with
$$
\int_{-\infty}^{+\infty}K(x)dx=1,\hspace{5mm}\int_{-\infty}^{+\infty}K^{2}(x)dx=\mu^{2}<+\infty,\hspace{5mm}\frac{1}{2}\int_{-\infty}^{+\infty}x^{2}K(x)dx=\nu^{2}<+\infty.
$$
All the results which follow are based on the following lemma.
\begin{lem}
\label{supgnx}
If $h_{n}=n^{-\alpha}$ with $0<\alpha<1$, then, for all $(1+\alpha)/2<\beta<1$,
\begin{equation*}
\underset{|x|\leq{1/2}}\sup\left\vert\wh{g}_{n}(x)-g(x)\right\vert=\mathcal{O}\left(n^{-2\alpha}+n^{\beta-1}\right)\hspace{7mm}\textnormal{a.s.}
\end{equation*}
\end{lem}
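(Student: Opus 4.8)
The plan is to center $\wh{g}_{n}$ at its mean and treat separately the deterministic bias $\dE[\wh{g}_{n}(x)]-g(x)$ and the stochastic fluctuation $\wh{g}_{n}(x)-\dE[\wh{g}_{n}(x)]$, bounding each uniformly over $x\in[-1/2,1/2]$. The bias part is routine; the uniform almost sure control of the fluctuation is where the real work lies, and it is exactly there that the restriction $\beta>(1+\alpha)/2$ enters.

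For the bias, I would write $\dE[\tfrac{1}{h_i}K(\tfrac{X_i-x}{h_i})]=\int K(v)g(x+h_i v)\,dv$ and Taylor expand $g$ to second order. Symmetry of $K$ kills the first-order term, and since $g''$ is bounded the remainder is $\mathcal{O}(h_i^2)$ uniformly in $x$. Averaging over $i$ gives $|\dE[\wh{g}_n(x)]-g(x)|=\mathcal{O}(\tfrac1n\sum_{i=1}^n h_i^2)=\mathcal{O}(\tfrac1n\sum_{i=1}^n i^{-2\alpha})$, which is $\mathcal{O}(n^{-2\alpha})$ when $\alpha\le1/2$ and $\mathcal{O}(n^{-1})$ when $\alpha>1/2$; in the latter case $(1+\alpha)/2>3/4$ forces $\beta>3/4$, so $n^{-1}=\mathcal{O}(n^{\beta-1})$. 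Either way the bias is $\mathcal{O}(n^{-2\alpha}+n^{\beta-1})$.

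For the fluctuation, fix $x$ and set $Z_i(x)=\tfrac{1}{h_i}K(\tfrac{X_i-x}{h_i})-\dE[\tfrac{1}{h_i}K(\tfrac{X_i-x}{h_i})]$, so that $\wh{g}_n(x)-\dE[\wh{g}_n(x)]=\tfrac1n\sum_{i=1}^n Z_i(x)$ is a normalized sum of independent, centered, bounded variables with $|Z_i(x)|=\mathcal{O}(h_i^{-1})=\mathcal{O}(i^\alpha)$ and $\sum_{i=1}^n\mathrm{Var}(Z_i(x))=\mathcal{O}(\sum_{i=1}^n i^\alpha)=\mathcal{O}(n^{1+\alpha})$. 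Bernstein's inequality with threshold $cn^\beta$ then yields, uniformly in $x$, a bound $\dP(|\sum_{i=1}^n Z_i(x)|>cn^\beta)\le 2\exp(-\kappa n^{2\beta-1-\alpha})$ for some $\kappa>0$, the Bernstein denominator $\mathcal{O}(n^{1+\alpha})+\mathcal{O}(n^{\alpha+\beta})$ being dominated by $n^{1+\alpha}$ because $\beta<1$; the exponent is a positive power of $n$ exactly under $\beta>(1+\alpha)/2$.

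To pass from a fixed $x$ to the supremum, I would discretize $[-1/2,1/2]$ into a grid of $N_n$ points. Using that $K$ is Lipschitz, the map $x\mapsto \wh{g}_n(x)-\dE[\wh{g}_n(x)]$ has Lipschitz constant $\mathcal{O}(\tfrac1n\sum_{i=1}^n i^{2\alpha})=\mathcal{O}(n^{2\alpha})$, so a mesh $\delta_n$ of order $n^{\beta-1-2\alpha}$ keeps the oscillation between neighbouring nodes $\mathcal{O}(n^{\beta-1})$ while needing only $N_n=\mathcal{O}(n^{1+2\alpha-\beta})$ nodes. A union bound over the grid combined with the Bernstein estimate gives probability $\mathcal{O}(n^{1+2\alpha-\beta})\exp(-\kappa n^{2\beta-1-\alpha})$, which is summable in $n$ since a positive power of $n$ beats any polynomial factor; Borel--Cantelli then makes the grid maximum $\mathcal{O}(n^\beta)$ almost surely, and the oscillation bound extends this to the full supremum. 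Adding the bias term finishes the proof. I expect the main obstacle to be precisely this uniform-in-$x$ stochastic bound: the grid must be fine enough that the deterministic oscillation stays below $n^{\beta-1}$ yet coarse enough that the union bound survives, so the Bernstein exponential must dominate the grid cardinality — the balance guaranteed by $\beta>(1+\alpha)/2$, equivalently $2\beta-1-\alpha>0$. A secondary technical point is that the chaining step needs mild regularity of $K$ (Lipschitz or bounded variation), and some care is required at $x=\pm1/2$, where the bias expansion must account for the boundary of the support of $g$.
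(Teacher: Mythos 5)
Your proposal is correct and rests on the same decomposition as the paper: $\wh{g}_{n}-g$ is split into the bias $\dE[\wh{g}_{n}]-g$ and the centred fluctuation, and the bias is handled identically (Taylor expansion of $g$ to second order, the first-order term killed by the symmetry of $K$, giving $\mathcal{O}(\tfrac1n\sum_k h_k^2)$ uniformly in $x$ — you are in fact slightly more careful than the paper in the regime $\alpha>1/2$, where the correct rate is $\mathcal{O}(n^{-1})$ rather than $\mathcal{O}(h_n^2)$, absorbed into $n^{\beta-1}$ as you note). Where you genuinely diverge is the uniform almost sure control of the fluctuation: the paper writes $n(\wh{g}_{n}(x)-\dE[\wh{g}_{n}(x)])$ as a square-integrable martingale $M_n(x)$ indexed by $x$, verifies a bracket bound $\langle M(x)\rangle_n=\mathcal{O}(n^{1+\alpha})$, an increment bound $|\Delta M_n(x)|=\mathcal{O}(n^{\alpha})$, and H\"older continuity in $x$ of both the increments and the brackets, and then invokes a ready-made uniform convergence theorem for parametrized martingales (Theorem 6.4.34 of Duflo) to get $\sup_{|x|\le1/2}|M_n(x)|=o(n^{\beta})$ a.s.; you instead exploit the independence of the $X_i$ directly and run the chaining by hand — Bernstein's inequality at threshold $cn^{\beta}$ with variance proxy $n^{1+\alpha}$, a grid of mesh $n^{\beta-1-2\alpha}$ controlled by the Lipschitz constant $\mathcal{O}(n^{2\alpha})$ of $x\mapsto\wh{g}_n(x)$, a union bound, and Borel--Cantelli. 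The two routes hinge on exactly the same balance $2\beta-1-\alpha>0$; yours is more elementary and self-contained, while the paper's black-box argument would survive if the $X_i$ were only martingale increments rather than i.i.d. The boundary issue you flag at $x=\pm1/2$ (where $g$ jumps to zero, so the second-order Taylor expansion of the bias is not literally valid) is real but is equally unaddressed in the paper's own proof, so it is a shared blemish rather than a gap specific to your argument.
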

\begin{proof}
The proof is given in Section 3. 
\end{proof}
\noindent In the sequel, we choose $h_{n}=n^{-\alpha}$ with $0<\alpha<1$ and $\gamma_{n}=1/n$. Our first result concerns the almost sure convergence of the estimator $\wh{\theta}_{n}$.
\begin{thm}
\label{thmascvg}
Assume that $(\mathcal{H}_1)$ and $(\mathcal{H}_2)$ hold and that
$|\theta|<1/4$. Then, if $K$ is a Lipschitz function, for all $0<\alpha<1$, $\wh{\theta}_{n}$ converges almost surely to $\theta$.\\
In addition, the number of times that 
the random variable $\wh{\theta}_{n}+\text{sign}(f_1)\gamma_{n+1}\wh{T}_{n+1}$ goes outside of $C$ 
is almost surely finite.
\end{thm}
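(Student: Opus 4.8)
The plan is to treat the recursion \eqref{RMAg} as a Robbins--Monro scheme with a biased, non-vanishing perturbation, and to control that bias uniformly through Lemma~\ref{supgnx}. I would take as Lyapunov function $V_n=(\wh{\theta}_n-\theta)^2$ and aim to apply the Robbins--Siegmund theorem so as to obtain simultaneously the convergence of $V_n$ and the summability of the ``useful'' drift. Since $|\theta|<1/4$, the target $\theta$ lies in the interior of $C$, so $\pi_C$ is a contraction towards $\theta$ and $V_{n+1}\leq\bigl(\wh{\theta}_n-\theta+\text{sign}(f_1)\gamma_{n+1}\wh{T}_{n+1}\bigr)^2$. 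This discards the projection at the cost of an inequality and lets me work with the unconstrained increment.

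Expanding the square and conditioning on $\cF_n$, everything reduces to $\dE[\wh{T}_{n+1}\,|\,\cF_n]$ and $\dE[\wh{T}_{n+1}^2\,|\,\cF_n]$. Because $\wh{g}_n$ and $\wh{\theta}_n$ are $\cF_n$-measurable, $X_{n+1}$ has density $g$, and $\dE[\varepsilon_{n+1}]=0$, a direct computation gives $\dE[\wh{T}_{n+1}\,|\,\cF_n]=\int_{-1/2}^{1/2}\sin(2\pi(x-\wh{\theta}_n))f(x-\theta)\frac{g(x)}{\wh{g}_n(x)}\,dx$, which I compare with $\phi(\wh{\theta}_n)$ from \eqref{deffunctionphi}, obtained by replacing $\wh{g}_n$ by $g$ and using that $f$ is even and $1$-periodic under $(\mathcal{H}_2)$. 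Their difference is $r_n:=\dE[\wh{T}_{n+1}\,|\,\cF_n]-\phi(\wh{\theta}_n)=\int_{-1/2}^{1/2}\sin(2\pi(x-\wh{\theta}_n))f(x-\theta)\frac{g(x)-\wh{g}_n(x)}{\wh{g}_n(x)}\,dx$. Since $g\geq C_g>0$ on $[-1/2,1/2]$ by $(\mathcal{H}_1)$ and $\wh{g}_n\to g$ uniformly by Lemma~\ref{supgnx}, there is an almost surely finite random $N$ with $\wh{g}_n\geq C_g/2$ for $n\geq N$; together with the boundedness of $f$ this yields $|r_n|\leq C\sup_{|x|\leq1/2}|g(x)-\wh{g}_n(x)|=\mathcal{O}(n^{-2\alpha}+n^{\beta-1})$ a.s. The same lower bound, with $\dE[Y_{n+1}^2\,|\,X_{n+1}]=f(X_{n+1}-\theta)^2+\sigma^2$, bounds $\dE[\wh{T}_{n+1}^2\,|\,\cF_n]$ by a constant eventually.

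Using $\text{sign}(f_1)\phi(\wh{\theta}_n)=-|f_1|\sin(2\pi(\wh{\theta}_n-\theta))$, the cross term contributes $-c_n$ with $c_n:=2\gamma_{n+1}|f_1|(\wh{\theta}_n-\theta)\sin(2\pi(\wh{\theta}_n-\theta))\geq0$, since $u\mapsto u\sin(2\pi u)\geq0$ on $(-1/2,1/2)$, an interval containing $\wh{\theta}_n-\theta$ as $\wh{\theta}_n,\theta\in C$. The leftover terms are of order $\gamma_{n+1}|r_n|$ and $\gamma_{n+1}^2$. With $\gamma_n=1/n$ and $\beta<1$, both $\sum_n\gamma_{n+1}|r_n|$ and $\sum_n\gamma_{n+1}^2$ converge (the first because $\sum n^{-1-2\alpha}$ and $\sum n^{\beta-2}$ converge, the second by \eqref{hypgamma}). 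Hence $\dE[V_{n+1}\,|\,\cF_n]\leq V_n+b_n-c_n$ with $\sum_n b_n<\infty$ a.s., and the Robbins--Siegmund theorem gives that $V_n\to V_\infty$ a.s.\ for some finite $V_\infty$, with $\sum_n c_n<\infty$ a.s. If $V_\infty>0$ then $\sqrt{V_\infty}\in(0,1/2)$, so $c_n/\gamma_{n+1}\to2|f_1|\sqrt{V_\infty}\sin(2\pi\sqrt{V_\infty})>0$ and $\sum_n c_n=\infty$ because $\sum_n\gamma_n=\infty$, a contradiction. Therefore $V_\infty=0$, that is $\wh{\theta}_n\to\theta$ a.s.

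For the last statement I would show $\gamma_{n+1}\wh{T}_{n+1}\to0$ a.s. Eventually $\wh{g}_n\geq C_g/2$ and $f$ is bounded, so $|\wh{T}_{n+1}|\leq C(1+|\varepsilon_{n+1}|)$; since $\varepsilon$ has finite variance, Borel--Cantelli gives $|\varepsilon_{n+1}|/(n+1)\to0$ a.s., whence $\gamma_{n+1}\wh{T}_{n+1}\to0$ a.s. As $\wh{\theta}_n\to\theta$ with $|\theta|<1/4$, the point $\wh{\theta}_n$ eventually sits at positive distance from $\partial C$, so $\wh{\theta}_n+\text{sign}(f_1)\gamma_{n+1}\wh{T}_{n+1}\in C$ for all large $n$ and the number of exits is a.s.\ finite. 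The main obstacle is the control of the non-vanishing conditional bias $r_n$: unlike in \cite{Pelletier98}, $(\wh{T}_{n+1}-T_{n+1})^2$ is not $o_{\dP}(\gamma_n)$, and the argument hinges on the fact that $r_n$ enters only through the summable series $\sum_n\gamma_{n+1}|r_n|$, summability being secured by the uniform almost sure rate of Lemma~\ref{supgnx} together with the lower bound $g\geq C_g$.
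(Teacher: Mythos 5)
Your proposal is correct and follows essentially the same route as the paper: it uses $V_n=(\wh{\theta}_n-\theta)^2$ with the $1$-Lipschitz projection, controls the bias $\dE[\wh{T}_{n+1}-T_{n+1}\,|\,\cF_n]$ and the second conditional moment through the uniform rate of Lemma \ref{supgnx} and the lower bound on $g$, and concludes by the Robbins--Siegmund theorem. The only differences are presentational (you compute $\dE[\wh{T}_{n+1}\,|\,\cF_n]$ directly and spell out the final contradiction argument and the exit-count step, which the paper delegates to the corresponding passage of \cite{BF10}).
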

\begin{proof} The proof is given in Section 3. 
\end{proof}
\noindent 
Before establishing the asymptotic normality of $\wh{\theta}_{n}$, we need the following lemma on the mean square error of $\wh{\theta}_{n}$.
\begin{lem}
\label{meanthetan}
Let $m\geq0$ and $\varepsilon>0$ such that $C_{g}>\varepsilon$ and define 
\begin{equation}
\label{defTm}
T_{m}=\inf\{n\geq m~:~|\wh{\theta}_{n}-\theta|\geq\varepsilon~~~\textnormal{ or }\underset{x\in{[-1/2;1/2]}}\inf\wh{g}_{n}(x)\leq{C_{g}-\varepsilon}\}.
\end{equation}
Suppose that $4\pi|f_{1}|>1$, then, for $0<\alpha<1/2$,
\begin{equation*}
\mathbb{E}\left[\left(\wh{\theta}_{n}-\theta\right)^2\rI_{\{T_{m}=+\infty\}}\right]
=\mathcal{O}\left(n^{-2\alpha}+n^{\frac{\alpha-1}{2}}\right).
\end{equation*}
\end{lem}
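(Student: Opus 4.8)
The plan is to control the stopped second moment and transfer it to the event $\{T_m=+\infty\}$. Since $\{T_m=+\infty\}\subseteq\{T_m>n\}$ and, with $\cF_n=\sigma(X_1,Y_1,\dots,X_n,Y_n)$, the event $\{T_m>n\}$ is $\cF_n$-measurable, it suffices to bound $a_n:=\dE\bigl[(\wh{\theta}_n-\theta)^2\rI_{\{T_m>n\}}\bigr]$. Writing $V_n=(\wh{\theta}_n-\theta)^2$ and using that $\pi_C$ is $1$-Lipschitz with $\pi_C(\theta)=\theta$ (as $|\theta|<1/4$), the recursion \eqref{RMAg} gives
\[
V_{n+1}\le\bigl(\wh{\theta}_n-\theta+\text{sign}(f_1)\gamma_{n+1}\wh{T}_{n+1}\bigr)^2
=V_n+2\,\text{sign}(f_1)\gamma_{n+1}(\wh{\theta}_n-\theta)\wh{T}_{n+1}+\gamma_{n+1}^2\wh{T}_{n+1}^2 .
\]
So the projection may be discarded and I reduce everything to an inequality driven by $\wh{T}_{n+1}$.

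Next I would condition on $\cF_n$. Since $\veps_{n+1}$ is centered and independent and $\wh{g}_n$ is $\cF_n$-measurable,
\[
\dE\bigl[\wh{T}_{n+1}\mid\cF_n\bigr]
=\int_{-1/2}^{1/2}\frac{\sin(2\pi(x-\wh{\theta}_n))}{\wh{g}_n(x)}\,f(x-\theta)\,g(x)\,dx
=\phi(\wh{\theta}_n)+R_n ,
\]
where, using $(\mathcal{H}_2)$ (so that $\int_{-1/2}^{1/2}\sin(2\pi(x-\wh{\theta}_n))f(x-\theta)\,dx=\phi(\wh{\theta}_n)$), the remainder is $R_n=\int_{-1/2}^{1/2}\sin(2\pi(x-\wh{\theta}_n))f(x-\theta)\frac{g(x)-\wh{g}_n(x)}{\wh{g}_n(x)}\,dx$. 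Setting $\delta=\wh{\theta}_n-\theta$, one has $\text{sign}(f_1)(\wh{\theta}_n-\theta)\phi(\wh{\theta}_n)=-|f_1|\,\delta\sin(2\pi\delta)\le-|f_1|\,c(\veps)\,\delta^2$ on $\{|\delta|<\veps\}$, with $c(\veps)=\inf_{0<|u|\le\veps}\sin(2\pi u)/u\to2\pi$ as $\veps\to0$. I would choose $\veps$ small enough that $\kappa:=2|f_1|\,c(\veps)>1$; this is exactly where the hypothesis $4\pi|f_1|>1$ is used. For the last term, on $\{T_m>n\}$ we have $\wh{g}_n\ge C_g-\veps$, so $|\wh{T}_{n+1}|\le|Y_{n+1}|/(C_g-\veps)$ and $\dE[\wh{T}_{n+1}^2\rI_{\{T_m>n\}}\mid\cF_n]\le(\|f\|_\infty^2+\sigma^2)/(C_g-\veps)^2=:M$.

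The crux is to estimate the drift remainder $\dE[|\delta|\,|R_n|\rI_{\{T_m>n\}}]$ at the right rate. On $\{T_m>n\}$, $\wh{g}_n\ge C_g-\veps$ and $f$ is bounded, hence
\[
|R_n|\,\rI_{\{T_m>n\}}\le\frac{\|f\|_\infty}{C_g-\veps}\int_{-1/2}^{1/2}|g(x)-\wh{g}_n(x)|\,dx .
\]
Rather than use the almost sure uniform bound of Lemma~\ref{supgnx} (whose random constant is not directly usable in $L^1$), I would decompose $g-\wh{g}_n=(g-\dE\wh{g}_n)+(\dE\wh{g}_n-\wh{g}_n)$ and integrate in $x$. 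A Taylor expansion with the symmetric kernel and $g''$ bounded gives the uniform bias $\sup_x|g(x)-\dE\wh{g}_n(x)|=\cO(n^{-1}\sum_{i=1}^n h_i^2)=\cO(n^{-2\alpha})$, while $\mathrm{Var}(\wh{g}_n(x))\le n^{-2}\sum_{i=1}^n\|g\|_\infty\mu^2/h_i=\cO(n^{\alpha-1})$ uniformly, so $\dE|\wh{g}_n(x)-\dE\wh{g}_n(x)|\le\mathrm{Var}(\wh{g}_n(x))^{1/2}=\cO(n^{(\alpha-1)/2})$. Integrating, $\dE[|R_n|\rI_{\{T_m>n\}}]=\cO(n^{-2\alpha}+n^{(\alpha-1)/2})$, and since $|\delta|\le\veps$ on $\{T_m>n\}$ the drift remainder is $\cO(n^{-2\alpha}+n^{(\alpha-1)/2})$.

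Collecting the three contributions, multiplying by $\rI_{\{T_m>n\}}$, taking expectations and using $\rI_{\{T_m>n+1\}}\le\rI_{\{T_m>n\}}$ together with $\gamma_{n+1}=1/(n+1)$, I obtain
\[
a_{n+1}\le\Bigl(1-\frac{\kappa}{n+1}\Bigr)a_n
+\cO\bigl(n^{-1-2\alpha}+n^{-1-(1-\alpha)/2}+n^{-2}\bigr),
\qquad \kappa=2|f_1|\,c(\veps).
\]
The conclusion then follows from the standard deterministic lemma stating that $a_{n+1}\le(1-\kappa/n)a_n+\cO(n^{-1-p})$ with $0<p<\kappa$ forces $a_n=\cO(n^{-p})$: since $\kappa>1$, one has $\kappa>2\alpha$ and $\kappa>(1-\alpha)/2$ for $0<\alpha<1/2$, so $a_n=\cO(n^{-2\alpha}+n^{(\alpha-1)/2})$, the $\cO(n^{-1})$ coming from the noise term being dominated. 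I expect the main obstacle to be the remainder estimate $\dE[|R_n|\rI_{\{T_m>n\}}]$: one must avoid the a.s. supremum of Lemma~\ref{supgnx} and instead control bias and variance of the \emph{recursive} Parzen--Rosenblatt estimator at the precise rate $n^{-2\alpha}+n^{(\alpha-1)/2}$, and one must secure a contraction constant $\kappa>1$, which is precisely what $4\pi|f_1|>1$ provides once $\veps$ is taken small.
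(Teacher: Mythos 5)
Your proposal is correct and follows essentially the same route as the paper's proof: a recursion for the stopped second moment $\dE[(\wh{\theta}_n-\theta)^2\rI_{\{T_m>n\}}]$, in which the term $\mathrm{sign}(f_1)(\wh{\theta}_n-\theta)\phi(\wh{\theta}_n)$ supplies a contraction factor $1-\kappa\gamma_{n+1}$ with $\kappa>1$ (this is exactly where $4\pi|f_1|>1$ enters, matching the paper's bound \eqref{majv} on $v$ and the constant $q$), and in which the only nontrivial input is the $L^1$ rate $\sup_{x}\dE\left[\left\vert g(x)-\wh{g}_n(x)\right\vert\right]=\mathcal{O}(n^{-2\alpha}+n^{(\alpha-1)/2})$; you then conclude by the standard comparison lemma for recursions $a_{n+1}\leq(1-\kappa/n)a_n+\mathcal{O}(n^{-1-p})$, where the paper instead solves the same recursion explicitly through the products $\beta_n=\prod_{k}(1+q\gamma_k)=\mathcal{O}(n^q)$ in \eqref{alpha} --- an equivalent bookkeeping. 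Two points where your execution is actually tighter: you prove the $L^1$ rate by an explicit bias--variance split (bias $\mathcal{O}(n^{-1}\sum h_i^2)$, standard deviation $\mathcal{O}((n^{-2}\sum h_i^{-1})^{1/2})$), whereas the paper merely invokes it as ``well-known'' after \eqref{majoQind}; and you bound the noise term $\dE[\wh{T}_{n+1}^2\rI_{\{T_m>n\}}|\cF_n]$ deterministically by $(\Vert f\Vert_\infty^2+\sigma^2)/(C_g-\veps)^2$ using the lower bound on $\wh{g}_n$ built into the event $\{T_m>n\}$, which avoids the paper's passage from the almost-sure bound \eqref{majorPnfin}, whose implied constant is random, to a deterministic constant $L$ in \eqref{Equationmeantheta2}. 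The one caveat, which you share with the paper, is that the contraction requires $\veps$ small enough (your condition $c(\veps)>1/(2|f_1|)$, the paper's choice of $\veps$ after \eqref{majv}), so the lemma is established for sufficiently small $\veps$ rather than for every $\veps<C_g$ as literally stated.
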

\begin{proof} The proof is given in Section 3. 
\end{proof}
\noindent 
In order to establish the asymptotic normality of $\wh{\theta}_{n}$, it is necessary to
introduce a second auxiliary function $\varphi$ defined, for all $t\in \dR$, by
\begin{eqnarray}
\label{defvarphi}
 \varphi(t)&=&\dE\Bigl[\frac{\sin^2(2\pi(X-t)) }{g^2(X)}(f^2(X-\theta) + \sigma^2)\Bigr], \\
 &=& \int_{-1/2}^{1/2}\frac{\sin^2(2\pi(x-t))}{g(x)}(f^2(x-\theta)+ \sigma^2)\,dx. \nonumber
\end{eqnarray}
As soon as $4\pi |f_{1}|>1$, denote
\begin{equation}
\label{varasympt}
\xi^{2}(\theta)=\frac{\varphi(\theta)}{4\pi |f_{1}|-1}.
\end{equation}
Moreover, we need to add the following hypothesis on the regularity of $f$. 
\begin{displaymath}
\begin{array}{ll}
\hspace{-0.8cm}(\mathcal{H}_3) & \textrm{The shape function $f$ is twice differentiable with bounded derivatives}. 
 \end{array}
\end{displaymath}
\begin{thm}
\label{thmcltrm}
Assume that $(\mathcal{H}_1)$, $(\mathcal{H}_2)$ and $(\mathcal{H}_3)$ hold and that
$|\theta|<1/4$. Moreover, suppose that $(\veps_{n})$ has a finite moment of order $>2$
and that $4\pi |f_{1}|>1$. Then, if $K$ is a Lipschitz function, we have the asymptotic normality, for $1/4<\alpha<1/2$,
\begin{equation}
\label{cltrm}
\sqrt{n}(\wh{\theta}_{n}-\theta) \liml \cN(0, \xi^2(\theta)).
\end{equation}
\end{thm}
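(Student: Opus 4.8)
The plan is to read \eqref{RMAg} as a Robbins--Monro type recursion and apply a central limit theorem for stochastic algorithms, after reducing to the situation where the projection is inactive. By Theorem \ref{thmascvg}, $\wh{\theta}_n\to\theta$ almost surely and the iterate leaves $C$ only finitely often; since $|\theta|<1/4$, for $n$ large enough \eqref{RMAg} coincides with the unprojected recursion $\wh{\theta}_{n+1}=\wh{\theta}_n+\text{sign}(f_1)\gamma_{n+1}\wh{T}_{n+1}$, so it suffices to prove \eqref{cltrm} for this linear recursion. Throughout I would localize on the event $\{T_m=+\infty\}$ of Lemma \ref{meanthetan}, whose complement has probability tending to $0$, so that $\wh{\theta}_n$ stays near $\theta$ and $\inf_x\wh{g}_n(x)$ stays bounded away from $0$; this is what makes the $L^2$ estimate of Lemma \ref{meanthetan} available.

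Writing $Z_n=\wh{\theta}_n-\theta$, I would split the increment along the filtration as
\begin{equation*}
\text{sign}(f_1)\wh{T}_{n+1}=\text{sign}(f_1)\dE[\wh{T}_{n+1}\,|\,\cF_n]+\Delta M_{n+1},
\end{equation*}
where $\Delta M_{n+1}=\text{sign}(f_1)(\wh{T}_{n+1}-\dE[\wh{T}_{n+1}\,|\,\cF_n])$ is a martingale increment. The drift is treated by adding and subtracting $T_{n+1}$: since $\dE[T_{n+1}\,|\,\cF_n]=\phi(\wh{\theta}_n)$ and $\text{sign}(f_1)\phi(x)=|f_1|\sin(2\pi(\theta-x))$ vanishes at $x=\theta$ with derivative $-2\pi|f_1|$, a Taylor expansion gives $\text{sign}(f_1)\phi(\wh{\theta}_n)=-2\pi|f_1|Z_n+\mathcal{O}(Z_n^2)$. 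The recursion then takes the canonical form $Z_{n+1}=(1-2\pi|f_1|\gamma_{n+1})Z_n+\gamma_{n+1}(\Delta M_{n+1}+b_{n+1})+\mathcal{O}(\gamma_{n+1}Z_n^2)$, with drift coefficient $a=2\pi|f_1|$, so that $2a-1=4\pi|f_1|-1>0$ by hypothesis, and with $b_{n+1}=\text{sign}(f_1)\dE[\wh{T}_{n+1}-T_{n+1}\,|\,\cF_n]$ the bias created by replacing $g$ with $\wh{g}_n$.

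I would then identify the limiting conditional variance of the martingale. Using $\dE[\veps_{n+1}^2]=\sigma^2$ and $(\mathcal{H}_1)$,
\begin{equation*}
\dE[\wh{T}_{n+1}^2\,|\,\cF_n]=\int_{-1/2}^{1/2}\frac{\sin^2(2\pi(x-\wh{\theta}_n))}{\wh{g}_n^2(x)}(f^2(x-\theta)+\sigma^2)\,g(x)\,dx,
\end{equation*}
which, by Lemma \ref{supgnx} (so that $\wh{g}_n\to g$ uniformly) and by $\wh{\theta}_n\to\theta$, converges almost surely to $\varphi(\theta)$ from \eqref{defvarphi}. The assumed finite moment of order $>2$ of $(\veps_n)$ would serve to verify the conditional Lindeberg condition. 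Applying the central limit theorem for algorithms of the form $Z_{n+1}=(1-a\gamma_{n+1})Z_n+\gamma_{n+1}(\Delta M_{n+1}+o(1))$ with $\gamma_n=1/n$ and $2a-1>0$, the martingale part contributes $\sqrt{n}\,Z_n\liml\cN(0,\varphi(\theta)/(2a-1))=\cN(0,\xi^2(\theta))$.

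The main obstacle is the control of $b_{n+1}$, precisely where this model departs from the classical theory: as noted after \eqref{RMalgo}, $(\wh{T}_{n+1}-T_{n+1})^2$ is not $o_{\dP}(\gamma_n)$, so Pelletier's results do not apply and the estimation of $g$ could a priori inflate the variance. The delicate point to establish is that the contribution of $(b_n)$ to $\sqrt{n}(\wh{\theta}_n-\theta)$, obtained by weighting $b_k$ with the Green kernel $\prod_{j>k}(1-a\gamma_j)$ of the linear recursion, is asymptotically negligible, so that $\xi^2(\theta)$ coincides with the variance of the known-$g$ case. I would split $\wh{g}_n-g$ into its deterministic bias, of order $\mathcal{O}(n^{-2\alpha})$ by the kernel expansion, whose weighted sum vanishes exactly when $\alpha>1/4$ (the lower restriction on $\alpha$), and its stochastic fluctuation, which requires integrating $\wh{g}_n-g$ against the smooth weight $\sin(2\pi(x-\wh{\theta}_n))f(x-\theta)/g(x)$ and a careful second-order analysis of the interplay between the kernel fluctuations and the innovations of the algorithm, controlled in $L^2$ through Lemma \ref{meanthetan} and the regularity assumptions $(\mathcal{H}_3)$ on $f$ together with the Lipschitz property of $K$; here the upper restriction $\alpha<1/2$ keeps the variance of $\wh{g}_n$ small enough. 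Establishing that these two effects do not perturb the limiting variance is the crux of the argument.
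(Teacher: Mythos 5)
Your overall architecture (localization on $\{T_{m}=+\infty\}$ using Theorem \ref{thmascvg} and Lemma \ref{meanthetan}, linearization of the drift around $\theta$ with rate $q=\phi^{\prime}(\theta)=-2\pi f_{1}$ and the condition $4\pi|f_{1}|>1$, a martingale CLT for the innovation term with limiting variance $\varphi(\theta)$, and the observation that the deterministic bias $\mathcal{O}(n^{-2\alpha})$ of $\wh{g}_{n}$ is washed out by the Green-kernel weighting exactly when $\alpha>1/4$) coincides with the paper's. The gap is in your treatment of the stochastic part of $b_{n+1}=\mathbb{E}[\wh{T}_{n+1}-T_{n+1}\,|\,\cF_{n}]$: you assert that its weighted contribution to $\sqrt{n}(\wh{\theta}_{n}-\theta)$ is asymptotically negligible, and it is not. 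The paper shows in \eqref{equationAfin} that the leading part of this conditional bias is $-\frac{1}{n}\sum_{i\leq n}\psi_{2}(U_{i})$ with $\psi_{2}$ as in \eqref{deffunctionspsi}, i.e.\ a centered i.i.d.\ average of exact order $n^{-1/2}$; after weighting by $k^{-1-q}$ and normalizing by $n^{1/2+q}$, its contribution is $O_{\mathbb{P}}(1)$ and non-degenerate, with limiting variance $\frac{2}{q(1+2q)}\sigma_{2}^{2}\neq 0$ by \eqref{limsumbn}. A proof aiming to show this term vanishes cannot be completed.

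What actually saves the theorem is an exact cancellation, not negligibility. The paper assembles the innovation term and the density-estimation fluctuation into the bivariate martingale $\mathcal{M}_{n}$ of \eqref{defMcaln}, computes the limiting covariance matrix \eqref{limcrochetM}, and then uses the identity $\sigma_{1,2}=\mathbb{E}[\psi_{1}(U)\psi_{2}(U)]=\sigma_{2}^{2}$ (a direct consequence of $\mathbb{E}[\veps]=0$ and the independence of $(X_{n})$ and $(\veps_{n})$), so that in the scalar variance $\Gamma_{11}+2\Gamma_{12}+\Gamma_{22}$ the extra variance $\frac{2}{q(1+2q)}\sigma_{2}^{2}$ is exactly offset by the cross-covariance term $-\frac{2}{q(1+2q)}\sigma_{1,2}$, leaving $-\sigma_{1}^{2}/(1+2q)=\xi^{2}(\theta)$. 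This joint (vector) CLT, together with the explicit computation \eqref{equationAfin} turning the bias into an i.i.d.\ sum correlated with the innovations, is the missing idea; without it your argument either fails at the negligibility claim or, if the bias term were simply added as an independent fluctuation, would produce a strictly larger variance than $\xi^{2}(\theta)$.
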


\begin{proof} The proof is given in Section 3. 
\end{proof}

%\begin{rem} In the particular case where $4\pi |f_{1}|=1$, it is also possible to show that
%$$
%\sqrt{\frac{n}{\log(n)}}(\wh{\theta}_{n}-\theta) \liml \cN(0, \varphi(\theta)).
%$$
%Asymptotic results are also available when $0<4\pi |f_{1}|<1$.
%However, we have chosen to focus our attention on the more attractive case
%$4\pi |f_{1}|>1$.
%\end{rem}

\begin{rem}
We could expect that asymptotic variance obtained in Theorem \ref{thmcltrm} be larger than asymptotic variance obtained when the density $g$ is supposed to be known. Nevertheless, comparing the result obtained here and the one obtained in Theorem 2.2 of \cite{BF10}, we point out that the asymptotic variances are exactly the same. Hence, with a little assumption on the regularity of $f$, estimation of the density $g$ does not disturb the estimation of $\theta$. In that sense, the estimation procedure of $\theta$ by algorithm \eqref{RMAg} improves the one proposed in \cite{BF10}.
\end{rem}

\vspace{2ex}

%%%%%%%%%%%%%%%%%%%%%%%%%%%%%%%%%%%%%%%%%%%%%%%%%%%%%%%%%%%%%%%%%%%%%%%%%%%%%%%%

\section{PROOFS}

%%%%%%%%%%%%%%%%%%%%%%%%%%%%%%%%%%%%%%%%%%%%%%%%%%%%%%%%%%%%%%%%%%%%%%%%%%%%%%%%

\subsection{Proof of Lemma \ref{supgnx}.}
For all $x\in{[-1/2;1/2]}$, we have
\begin{eqnarray}
\nonumber
\wh{g}_{n}(x)-g(x)&=&\wh{g}_{n}(x)-\mathbb{E}\left[\wh{g}_{n}(x)\right]+\mathbb{E}\left[\wh{g}_{n}(x)\right]-g(x)\\
\label{decompogchap}
&=&\frac{M_{n}(x)}{n}+\frac{R_{n}(x)}{n}
\end{eqnarray}
where, for all $n\geq1$ and for all $x\in{[-1/2;1/2]}$,
\begin{equation}
\label{defMngchap}
M_{n}(x)=\sum_{k=1}^{n}K_{h_{k}}\left(X_{k}-x\right)-\mathbb{E}\left[K_{h_{k}}\left(X_{k}-x\right)\right],
\end{equation}
and
\begin{equation}
\label{defRngchap}
R_{n}(x)=\sum_{k=1}^{n}\mathbb{E}\left[K_{h_{k}}\left(X_{k}-x\right)\right]-g(x),
\end{equation}
with, for $x\in{[-1/2;1/2]}$,
$$
K_{h_{k}}(x)=\frac{1}{h_{k}}K\left(\frac{x}{h_{k}}\right).
$$
Firstly, $(M_{n}(x))$ is a square integrable martingale whose increasing process is given, for all $n\geq 1$ and $x\in{[-1/2;1/2]}$, by
\begin{eqnarray*}
\label{defcrochetMngchap}
\langle M(x)\rangle_{n}&=&\sum_{k=1}^{n}\mathbb{E}\left[K_{h_{k}}\left(X_{k}-x\right)^{2}\right]-\mathbb{E}\left[K_{h_{k}}\left(X_{k}-x\right)\right]^{2}\\
&\leq&\sum_{k=1}^{n}\mathbb{E}\left[K_{h_{k}}\left(X_{k}-x\right)^{2}\right].
\end{eqnarray*}
However, for all $1\leq{k}\leq{n}$,
$$
\mathbb{E}\left[K_{h_{k}}\left(X_{k}-x\right)^{2}\right]=\frac{1}{h_{k}}\int_{\mathbb{R}}K^{2}\left(y\right)g(x+h_{k}y)dy.
$$
Hence, as $g$ is bounded, we deduce, for all $x\in{[-1/2;1/2]}$, that
\begin{equation}
\label{majocrochetMn}
\langle M(x)\rangle_{n}\leq\left\vert\left\vert g\right\vert\right\vert_{\infty}\mu^{2}\sum_{k=1}^{n}\frac{1}{h_{k}}\leq\left\vert\left\vert g\right\vert\right\vert_{\infty}\mu^{2}\frac{n}{h_{n}}\hspace{1cm}\textnormal{a.s.}
\end{equation}
Moreover, denote, for all $n\geq1$ and for all $x\in{[-1/2;1/2]}$, $\Delta M_{n}(x)=M_{n}(x)-M_{n-1}(x)$. Then, we have
\begin{equation}
\label{majodiffMn}
\left\vert\Delta M_{n}(x)\right\vert\leq{\frac{2}{h_{n}}\left\vert\left\vert K\right\vert\right\vert_{\infty}}.
\end{equation}
In particular, with the choice $h_{n}=1/n^{\alpha}$, we infer from \eqref{majocrochetMn} and from \eqref{majodiffMn} that there exists two constants $a$ and $b$ such that
\begin{equation}
\label{majocrochetdiff}
\langle M(0)\rangle_{n}\leq{a n^{1+\alpha}}\hspace{7mm}\textnormal{and}\hspace{7mm}\left\vert\Delta M_{n}(0)\right\vert\leq{b n^{\alpha}}.
\end{equation}
Moreover, as the kernel $K$ is bounded and Lipschitz, for all $\delta\in{]0;1[}$, there exists a constant $C_{\delta}$ such that, for all $x,y\in{\mathbb{R}}$,
\begin{equation}
\label{Lipschitznoyau}
\left\vert K(x)-K(y)\right\vert\leq{C_{\delta}\left\vert x-y\right\vert^{\delta}}.
\end{equation}
Thus, for all $x,y\in{[-1/2;1/2]}$, one obtain that
$$
\left\vert \Delta M_{n}(x)-\Delta M_{n}(y)\right\vert\leq{2 C_{\delta}\left\vert x-y\right\vert^{\delta}n^{\alpha\left(1+\delta\right)}}.
$$
In addition, for all $x,y\in{[-1/2;1/2]}$,
\begin{eqnarray*}
\langle M(x)-M(y)\rangle_{n}&\leq&\sum_{k=1}^{n}\mathbb{E}\left[\left(K_{h_{k}}\left(X_{k}-x\right))-K^{2}_{h_{k}}\left(X_{k}-y\right))\right)^{2}\right],\\
&\leq&\sum_{k=1}^{n}k^{2\alpha}\int_{\mathbb{R}}\left(K\left(k^{\alpha}\left(u-x\right)\right)-K\left(k^{\alpha}\left(u-y\right)\right)\right)^{2}g(u)du.
\end{eqnarray*}
With the change of variables $t=k^{\alpha}\left(u-x\right)$, one then obtain that
\begin{equation}
\label{diffMnvariablechange}
\langle M(x)-M(y)\rangle_{n}\leq\left\vert\left\vert g\right\vert\right\vert_{\infty}\sum_{k=1}^{n}k^{\alpha}\int_{\mathbb{R}}\left(K\left(t\right)-K\left(t+k^{\alpha}(x-y)\right)\right)^{2}dt.
\end{equation}
Consequently, we deduce from \eqref{Lipschitznoyau} that, for all $1\leq{k}\leq{n}$,
$$
\int_{\mathbb{R}}\left(K\left(t\right)-K\left(t+k^{\alpha}(x-y)\right)\right)^{2}dt\leq{2 C_{2\delta}\left\vert x-y\right\vert^{2\delta}k^{2\alpha\delta}}.
$$
Hence, it follows from \eqref{diffMnvariablechange} that, for all $x,~y\in{[-1/2;1/2]}$,
$$
\langle M(x)-M(y)\rangle_{n}\leq{2 C_{2\delta}\left\vert x-y\right\vert^{2\delta}n^{1+\alpha+2\alpha\delta}}.
$$
As $\delta$ ca be choosen as small as we want, the four conditions of Theorem 6.4.34 page 220 of \cite{Duflo97} are satisfactory, that is to say the martingale $(M_{n}(x))$ checks, for all $(1+\alpha)/2<\beta<1$,
\begin{equation}
\label{supMnx}
\underset{|x|\leq{1/2}}\sup\left\vert M_{n}(x)\right\vert=o(n^{\beta})\hspace{7mm}\textnormal{a.s.}
\end{equation}
Finally, it keeps to control the term $R_{n}(x)$ defined by \eqref{defRngchap}. However,
for all $x\in{[-1/2;1/2]}$ and for all $1\leq{k}\leq{n}$,
\begin{eqnarray*}
\left\vert\mathbb{E}\left[K_{h_{k}}\left(X_{k}-x\right)\right]-g(x)\right\vert&\leq&\int_{\mathbb{R}}K(y)\left\vert g(x+h_{k}y)dy-g(x)\right\vert\\
&\leq&h_{k}^{2}\left\vert\left\vert g^{\prime\prime}\right\vert\right\vert_{\infty}\nu^{2}.
\end{eqnarray*}
Hence,
\begin{equation}
\label{supRnx}
\underset{|x|\leq{1/2}}\sup\frac{\left\vert R_{n}(x)\right\vert}{n}=\mathcal{O}\left(\frac{1}{n}\sum_{k=1}^{n}h_{k}^{2}\right)=\mathcal{O}\left(h_{n}^{2}\right)\hspace{1cm}\textnormal{a.s.}
\end{equation}
The conjunction of \eqref{decompogchap}, \eqref{supMnx} and \eqref{supRnx} leads to, for all $(1+\alpha)/2<\beta<1$,
\begin{equation}
\underset{|x|\leq{1/2}}\sup\left\vert\wh{g}_{n}(x)-g(x)\right\vert=\mathcal{O}\left(n^{-2\alpha}+n^{\beta-1}\right)\hspace{7mm}\textnormal{a.s.}
\end{equation}
which concludes the proof.
$\hfill 
\mathbin{\vbox{\hrule\hbox{\vrule height1.5ex \kern.6em
\vrule height1.5ex}\hrule}}$

\subsection{Proof of Theorem \ref{thmascvg}.}
Without loss of generality, we suppose that $f_{1}>0$. Denote by $\mathcal{F}_{n}$ the sigma-algebra $\mathcal{F}_{n}=\sigma\left(X_{0},Y_{0},\hdots,X_{n},Y_{n}\right)$. We calculate the two first conditional moments of $\wh{T}_{n+1}$. On the one hand, for all $n\geq0$,
\begin{equation}
\label{meanTn+1}
\mathbb{E}\left[\wh{T}_{n+1}|\mathcal{F}_{n}\right]=\mathbb{E}\left[T_{n+1}|\mathcal{F}_{n}\right]+\mathbb{E}\left[\left(\wh{T}_{n+1}-T_{n+1}\right)|\mathcal{F}_{n}\right].\\
\end{equation}
where
\begin{equation}
\label{ancienT}
T_{n+1}=\frac{\sin\left(2\pi(X_{n+1}-\wh{\theta}_{n})\right)}{g(X_{n+1})}Y_{n+1}.
\end{equation}
Thanks (5.2) of \cite{BF10}, we have
$$
\mathbb{E}\left[T_{n+1}|\mathcal{F}_{n}\right]=\phi\left(\wh{\theta}_{n}\right)\hspace{5mm}\textnormal{a.s.},
$$
where the function $\phi$ is defined by \eqref{deffunctionphi}.
Hence, we deduce from \eqref{meanTn+1} that
\begin{equation}
\label{meanfinalTn+1}
\mathbb{E}\left[\wh{T}_{n+1}|\mathcal{F}_{n}\right]=\phi\left(\wh{\theta}_{n}\right)+\mathbb{E}\left[\left(\wh{T}_{n+1}-T_{n+1}\right)|\mathcal{F}_{n}\right]\hspace{5mm}\textnormal{a.s.}
\end{equation}
On the other hand, for all $n\geq0$,
\begin{eqnarray}
\nonumber
\mathbb{E}\left[\wh{T}_{n+1}^{2}|\mathcal{F}_{n}\right]&=&\mathbb{E}\left[\left(\wh{T}_{n+1}-T_{n+1}+T_{n+1}\right)^{2}|\mathcal{F}_{n}\right]\\
\label{mean2Tn+1}
&\leq&2\mathbb{E}\left[T_{n+1}^{2}|\mathcal{F}_{n}\right]+2\mathbb{E}\left[\left(\wh{T}_{n+1}-T_{n+1}\right)^{2}|\mathcal{F}_{n}\right].
\end{eqnarray}
Thanks (5.4) of \cite{BF10}, there exists a constant $M>0$ such that
$$
\underset{n\geq0}\sup~\mathbb{E}\left[T_{n+1}^{2}|\mathcal{F}_{n}\right]\leq{M}\hspace{5mm}\textnormal{a.s.}
$$
Hence, it follows from \eqref{mean2Tn+1} that, for all $n\geq0$,
\begin{equation}
\label{mean2finalTn+1}
\mathbb{E}\left[\wh{T}_{n+1}^{2}|\mathcal{F}_{n}\right]\leq 2M+2\mathbb{E}\left[\left(\wh{T}_{n+1}-T_{n+1}\right)^{2}|\mathcal{F}_{n}\right].
\end{equation}
Moreover, for all $n\geq0$, denote $V_{n}=\left(\wh{\theta}_{n}-\theta\right)^{2}$. Using the fact that $\pi_{K}$ is Lipschitz with constant $1$, we have, for all $n\geq0$,
\begin{equation}
\label{EquationRS1}
\mathbb{E}\left[V_{n+1}|\mathcal{F}_{n}\right]\leq V_{n}+\gamma_{n+1}^{2}\mathbb{E}\left[\wh{T}_{n+1}^{2}|\mathcal{F}_{n}\right]+2\gamma_{n+1}\left(\wh{\theta}_{n}-\theta\right)\mathbb{E}\left[\wh{T}_{n+1}|\mathcal{F}_{n}\right].
\end{equation}
Hence, it follows from \eqref{meanfinalTn+1}, \eqref{mean2finalTn+1} and the previous inequality \eqref{EquationRS1}, that
\begin{equation}
\label{EquationRS2}
\mathbb{E}\left[V_{n+1}|\mathcal{F}_{n}\right]\leq V_{n}+2\gamma_{n+1}^{2}\left(M+P_{n}\right)+2\gamma_{n+1}\left(\wh{\theta}_{n}-\theta\right)\phi\left(\wh{\theta}_{n}\right)+2\gamma_{n+1}Q_{n},
\end{equation}
where
\begin{equation}
\label{defPn}
P_{n}=\mathbb{E}\left[\left(\wh{T}_{n+1}-T_{n+1}\right)^{2}|\mathcal{F}_{n}\right]
\end{equation}
and
\begin{equation}
\label{defQn}
Q_{n}=\mathbb{E}\left[\left\vert\wh{T}_{n+1}-T_{n+1}\right\vert|\mathcal{F}_{n}\right].
\end{equation}
However, for all $n\geq0$,
\begin{eqnarray*}
\wh{T}_{n+1}-T_{n+1}&=&\sin\left(2\pi(X_{n+1}-\wh{\theta}_{n})\right)Y_{n+1}\left(\frac{1}{\wh{g}_{n}(X_{n+1})}-\frac{1}{g(X_{n+1})}\right)\\
&=&\frac{\sin\left(2\pi(X_{n+1}-\wh{\theta}_{n})\right)Y_{n+1}}{g(X_{n+1})\wh{g}_{n}(X_{n+1})}\left(g(X_{n+1})-\wh{g}_{n}(X_{n+1})\right).
\end{eqnarray*}
Since $g$ does not vanish on its support, $f$ is bounded and $\varepsilon_{n+1}$ is independent of $\mathcal{F}_{n}$ with finite moment of order $2$, we immediately deduce the existence of $C_{1}>0$ and $C_{2}>0$ such that
\begin{eqnarray}
\nonumber
P_{n}&=&\mathbb{E}\left[\frac{\sin^{2}\left(2\pi(X_{n+1}-\wh{\theta}_{n})\right)Y_{n+1}^{2}}{g^{2}(X_{n+1})\wh{g}_{n}^{2}(X_{n+1})}\left(g(X_{n+1})-\wh{g}_{n}(X_{n+1})\right)^{2}\Big|\mathcal{F}_{n}\right]\\
\label{majorTchap}
&\leq&C_{1} \mathbb{E}\left[\frac{\left(g(X_{n+1})-\wh{g}_{n}(X_{n+1})\right)^{2}}{\wh{g}_{n}^{2}(X_{n+1})}\Big|\mathcal{F}_{n}\right],
\end{eqnarray}
and
\begin{eqnarray}
\nonumber
Q_{n}&\leq&\mathbb{E}\left[\frac{\left\vert\sin\left(2\pi(X_{n+1}-\wh{\theta}_{n})\right)Y_{n+1}\right\vert}{g(X_{n+1})\wh{g}_{n}(X_{n+1})}\left\vert g(X_{n+1})-\wh{g}_{n}(X_{n+1})\right\vert\Big|\mathcal{F}_{n}\right],\\
\label{majorT2chap}
&\leq&C_{2}\mathbb{E}\left[\frac{\left\vert g(X_{n+1})-\wh{g}_{n}(X_{n+1})\right\vert}{\wh{g}_{n}(X_{n+1})}\Big|\mathcal{F}_{n}\right].
\end{eqnarray}
In addition, on the one hand
\begin{equation}
\label{conditional1}
 \mathbb{E}\left[\frac{\left(g(X_{n+1})-\wh{g}_{n}(X_{n+1})\right)^{2}}{\wh{g}_{n}^{2}(X_{n+1})}\Big|\mathcal{F}_{n}\right]=\int_{-1/2}^{1/2}\left(g(x)-\wh{g}_{n}(x)\right)^{2}\frac{g(x)}{\wh{g}_{n}^{2}(x)}dx,
\end{equation}
and on the other hand,
\begin{equation}
\label{conditional2}
 \mathbb{E}\left[\frac{\left\vert g(X_{n+1})-\wh{g}_{n}(X_{n+1})\right\vert}{\wh{g}_{n}(X_{n+1})}\Big|\mathcal{F}_{n}\right]=\int_{-1/2}^{1/2}\left\vert g(x)-\wh{g}_{n}(x)\right\vert\frac{g(x)}{\wh{g}_{n}(x)}dx.
\end{equation}
Then, it follows from Lemma \ref{supgnx} and the two previous calculations \eqref{conditional1} and \eqref{conditional2}  that, for all $(1+\alpha)/2<\beta<1$,
\begin{equation}
\label{finconditional1}
 \mathbb{E}\left[\frac{\left(g(X_{n+1})-\wh{g}_{n}(X_{n+1})\right)^{2}}{\wh{g}_{n}^{2}(X_{n+1})}\Big|\mathcal{F}_{n}\right]=\mathcal{O}\left(\left(n^{-4\alpha}+n^{2(\beta-1)}\right)\int_{-1/2}^{1/2}\frac{g(x)}{\wh{g}_{n}^{2}(x)}dx\right)\hspace{1cm}\textnormal{a.s.},
\end{equation}
and
\begin{equation}
\label{finconditional2}
 \mathbb{E}\left[\frac{\left\vert g(X_{n+1})-\wh{g}_{n}(X_{n+1})\right\vert}{\wh{g}_{n}(X_{n+1})}\Big|\mathcal{F}_{n}\right]=\mathcal{O}\left(\left(n^{-2\alpha}+n^{\beta-1}\right)\int_{-1/2}^{1/2}\frac{g(x)}{\wh{g}_{n}(x)}dx\right)\hspace{1cm}\textnormal{a.s.}
\end{equation}
We immediately deduce from \eqref{majorTchap} and \eqref{majorT2chap} that
\begin{equation}
\label{majorTchapfin}
 P_{n}=\mathcal{O}\left(\left(n^{-4\alpha}+n^{2(\beta-1)}\right)\int_{-1/2}^{1/2}\frac{g(x)}{\wh{g}^{2}_{n}(x)}dx\right)\hspace{1cm}\textnormal{a.s.}
\end{equation}
and
\begin{equation}
\label{majorT2chapfin}
Q_{n}=\mathcal{O}\left(\left(n^{-2\alpha}+n^{\beta-1}\right)\int_{-1/2}^{1/2}\frac{g(x)}{\wh{g}_{n}(x)}dx\right)\hspace{1cm}\textnormal{a.s.}
\end{equation}
Moreover, for all $x\in{[-1/2;1/2]}$,
$$
\underset{n\rightarrow{+\infty}}\lim \wh{g}_{n}(x)=g(x)\hspace{1cm}\textnormal{a.s.}
$$
Since $g$ and $\wh{g}_{n}$ are defined on the compact set $[-1/2;1/2]$, it follows from \eqref{majorTchapfin} and \eqref{majorT2chapfin} that
\begin{equation}
\label{majorPnfin}
 P_{n}=\mathcal{O}\left(n^{-4\alpha}+n^{2(\beta-1)}\right)\hspace{1cm}\textnormal{a.s.}
\end{equation}
and
\begin{equation}
\label{majorQnfin}
 Q_{n}=\mathcal{O}\left(n^{-2\alpha}+n^{\beta-1}\right)\hspace{1cm}\textnormal{a.s.}
\end{equation}
Finally, with the choice of step $\gamma_{n}=1/n$, we infer from the two previous equations that, for all $0<\alpha<1$,
\begin{equation}
\label{cvPnfin}
\sum_{n=0}^{+\infty}\gamma_{n+1}^{2}P_{n}<+\infty\hspace{1cm}\textnormal{a.s.}
\end{equation}
and
\begin{equation}
\label{cvQnfin}
\sum_{n=0}^{+\infty}\gamma_{n+1}Q_{n}<+\infty\hspace{1cm}\textnormal{a.s.}
\end{equation}
To conclude, we deduce from \eqref{cvPnfin} and \eqref{cvQnfin} together with \eqref{EquationRS2} and the Robbins-Siegmund Theorem (see \cite{Duflo97} page 18), that the sequence $(V_{n})$ converges almost surely to a finite random variable and
$$
\sum_{n=1}^{+\infty}\gamma_{n+1}\left(\wh{\theta}_{n}-\theta\right)\phi\left(\wh{\theta}_{n}\right)<+\infty
\hspace{1cm}\textnormal{a.s.}
$$
Following exactly the same lines as proof of Theorem 2.1 of \cite{BF10} from equation (5.6), we deduce that $\left(\wh{\theta}_{n}\right)$ converges almost surely to $\theta$ and that the number of times that 
the random variable $\wh{\theta}_{n}+\gamma_{n+1}\wh{T}_{n+1}$ goes outside of $C$ 
is almost surely finite.
$\hfill 
\mathbin{\vbox{\hrule\hbox{\vrule height1.5ex \kern.6em
\vrule height1.5ex}\hrule}}$
\subsection{Proof of Lemma \ref{meanthetan}}
Denote by $(W_{n})$ the sequence defined, for all $n\geq0$, by
$$
W_{n}=\frac{V_{n}}{\gamma_{n}}.
$$
Then, one deduce from \eqref{EquationRS2} and from the choice of step $\gamma_{n}=1/n$ that
\begin{equation}
\label{Equationmeantheta1}
\mathbb{E}\left[W_{n+1}|\mathcal{F}_{n}\right]\leq W_{n}\left(1+\gamma_{n}\right)+2\gamma_{n+1}\left(M+P_{n}\right)+2\left(\wh{\theta}_{n}-\theta\right)\phi\left(\wh{\theta}_{n}\right)
+2Q_{n}.
\end{equation}
Moreover, since \eqref{majorPnfin}, there exists a constant $L>0$ such that
\begin{equation}
\label{Equationmeantheta2}
\mathbb{E}\left[W_{n+1}|\mathcal{F}_{n}\right]\leq W_{n}\left(1+\gamma_{n}\right)+\gamma_{n+1}L+2\left(\wh{\theta}_{n}-\theta\right)\phi\left(\wh{\theta}_{n}\right)
+2Q_{n}.
\end{equation}
In addition, we have for all $x\in \dR$, $\phi(x)=2\pi f_{1}(\theta-x)+f_{1}(\theta-x)v(x)$ where
\begin{equation*}
v(x)=\frac{\sin(2 \pi (\theta -x) )-2 \pi (\theta -x)}{(\theta -x)}.
\end{equation*}
By the continuity of the function $v$, one can find $0<\varepsilon <1/2$ such that, if $|x -\theta |< \varepsilon$, 
\begin{equation}
\label{majv}
\frac{q}{2f_1} < v(x)<0.
\end{equation}
Hence, it follows from \eqref{Equationmeantheta2}  that for all $n\geq 1$, 
\begin{equation}
\label{Equationmeantheta3}
\mathbb{E}[W_{n+1}|\mathcal{F}_{n}]\leq{W_{n}+2\gamma_{n}W_{n}(q-f_{1}v(\wh{\theta}_{n}))+\gamma_{n}L+2Q_{n}}
\end{equation}
with $2q=1-4\pi f_{1}$ which means that $q<0$. 
Then, it follows from \eqref{defTm} and \eqref{majv} that 
\begin{equation}
\label{majvv}
0<-f_1 v(\wh{\theta}_{n})\rI_{\{T_{m}>n\}}< -\Bigl(\frac{q}{2}\Bigr)\rI_{\{T_{m}>n\}}.
\end{equation}
Hence, we deduce from the conjunction of \eqref{Equationmeantheta3}
and \eqref{majvv} that,
\begin{eqnarray}
\dE[W_{n+1}\rI_{\{T_{m}>n\}}|\cF_{n}]&\leq & W_{n}\rI_{\{T_{m}>n\}} + 2 \gamma_n W_n \rI_{\{T_{m}>n\}} \Bigl( q -\frac{q}{2} \Bigr)
+\gamma_{n}L+2Q_{n}\rI_{\{T_{m}>n\}}, \nonumber \\
\label{IF}
&\leq & W_{n}\rI_{\{T_{m}>n\}}(1+q\gamma_{n})+\gamma_{n}L+2Q_{n}\rI_{\{T_{m}>n\}}.
\end{eqnarray}
Since $\{T_{m}>n+1\}\subset \{T_{m}>n\}$, we obtain by taking the expectation on both sides
of \eqref{IF} that for all $n\geq m$,
\begin{equation}
\label{II}
\dE[W_{n+1}\rI_{\{T_{m}>n+1\}}]\leq (1+q\gamma_{n})\dE[W_{n}\rI_{\{T_{m}>n\}}]+\gamma_{n}L+2\mathbb{E}\left[Q_{n}\rI_{\{T_{m}>n\}}\right].
\end{equation}
From now on, denote $\alpha_{n}=\dE[W_{n}\rI_{\{T_{m}>n\}}]$. We infer from \eqref{II} that for all $n\geq m$,
\begin{equation}
\label{alpha}
\alpha_{n+1}\leq \beta_{n}\alpha_{m}+L\beta_{n}\sum_{k=m}^{n}\frac{\gamma_{k}}{\beta_{k}}+2\beta_{n}\sum_{k=m}^{n}\frac{1}{\beta_{k}}\mathbb{E}\left[Q_{k}\rI_{\{T_{m}>k\}}\right]
\hspace{1cm}\text{where}\hspace{1cm}
\beta_{n}=\prod_{k=m}^{n}(1+q\gamma_{k}).
\end{equation}
As $\gamma_n=1/n$, it follows from straightforward calculations that $\beta_{n}=O(n^q)$ and 
$$\sum_{k=1}^{n}\frac{\gamma_{k}}{\beta_{k}}=O(n^{-q}).$$
Moreover, one have from \eqref{majorT2chap} and \eqref{conditional2} that there exists $C_{2}>0$ such that,
\begin{equation}
\label{Qind}
Q_{n}\rI_{\{T_{m}>n\}}\leq C_{2}\int_{-1/2}^{1/2}\left\vert g(x)-\wh{g}_{n}(x)\right\vert\frac{g(x)}{\wh{g}_{n}(x)}\rI_{\{T_{m}>n\}}dx.
\end{equation}
However, for all $x\in{[-1/2;1/2]}$, we have
$$
\frac{g(x)}{\wh{g}_{n}(x)}\rI_{\{T_{m}>n\}}\leq{\frac{g(x)}{C_{g}-\varepsilon}}.
$$
Then, taking expectation on both sides of \eqref{Qind}, it follows from the previous inequality that
\begin{eqnarray}
\nonumber
\mathbb{E}\left[Q_{n}\rI_{\{T_{m}>n\}}\right]&\leq& C_{2}\int_{-1/2}^{1/2}\mathbb{E}\left[\left\vert g(x)-\wh{g}_{n}(x)\right\vert\right]\frac{g(x)}{C_{g}-\varepsilon}dx,\\
\label{majoQind}
&\leq& \frac{C_{2}}{C_{g}-\varepsilon}~\underset{x\in{[-1/2;1/2]}}\sup ~\mathbb{E}\left[\left\vert g(x)-\wh{g}_{n}(x)\right\vert\right].
\end{eqnarray}
The quantity $\mathbb{E}\left[\left\vert g(x)-\wh{g}_{n}(x)\right\vert\right]$ corresponds to the mean error of the recursive Parzen-Rosenblatt estimator. Hence, it is well-known that for $0<\alpha<1/2$,
$$
\underset{x\in{[-1/2;1/2]}}\sup ~\mathbb{E}\left[\left\vert g(x)-\wh{g}_{n}(x)\right\vert\right]=\mathcal{O}\left(n^{-2\alpha}+n^{\frac{\alpha-1}{2}}\right).
$$
Then, one deduce from \eqref{majoQind} that, for $0<\alpha<1/2$,
\begin{equation}
\label{majoQindfin}
\mathbb{E}\left[Q_{n}\rI_{\{T_{m}>n\}}\right]=\mathcal{O}\left(n^{-2\alpha}+n^{\frac{\alpha-1}{2}}\right),
\end{equation}
which implies that
$$
\beta_{n}\sum_{k=m}^{n}\frac{1}{\beta_{k}}\mathbb{E}\left[Q_{k}\rI_{\{T_{m}>k\}}\right]=\mathcal{O}\left(n^{-2\alpha+1}+n^{\frac{\alpha+1}{2}}\right).
$$
Thus, \eqref{alpha} together the previous equation implies that
$$
\alpha_{n}=\mathcal{O}\left(n^{-2\alpha+1}+n^{\frac{\alpha+1}{2}}\right).
$$
Hence, for all $m\geq0$,
\begin{equation}
\label{majWnind}
\dE[W_{n}\rI_{\{T_{m}=+\infty\}}]=\mathcal{O}\left(n^{-2\alpha+1}+n^{\frac{\alpha+1}{2}}\right),
\end{equation}
that is to say, for $0<\alpha<1/2$,
$$
\dE\left[\left(\wh{\theta}_{n}-\theta\right)^2\rI_{\{T_{m}=+\infty\}}\right]=\mathcal{O}\left(n^{-2\alpha}
+n^{\frac{\alpha-1}{2}}\right).
$$
$\hfill 
\mathbin{\vbox{\hrule\hbox{\vrule height1.5ex \kern.6em
\vrule height1.5ex}\hrule}}$
\subsection{Proof of Theorem \ref{thmcltrm}}
Without loss of generality, we suppose that $f_{1}>0$. We have the decomposition, for all $n\geq0$,
\begin{equation}
\label{decomposition1}
\wh{\theta}_{n+1}=\wh{\theta}_{n}+\gamma_{n+1}\left(\wh{T}_{n+1}
-\phi\left(\wh{\theta}_{n}\right)\right)+\gamma_{n+1}\phi\left(\wh{\theta}_{n}\right)+d_{n+1},
\end{equation}
where
\begin{equation}
\label{defdn+1}
d_{n+1}=\pi_{K}\left(\wh{\theta}_{n}+\gamma_{n+1}\wh{T}_{n+1}\right)
-\left(\wh{\theta}_{n}+\gamma_{n+1}\wh{T}_{n+1}\right).
\end{equation}
Moreover, as $\phi$ is two times differentiable, there exists $0<\xi_{n}<1$ such that
\begin{equation}
\label{IAF}
\phi\left(\wh{\theta}_{n}\right)=\left(\wh{\theta}_{n}-\theta\right)\phi^{\prime}\left(\theta\right)+\frac{\left(\wh{\theta}_{n}-\theta\right)^{2}}{2}\phi^{\prime\prime}\left(\theta+\xi_{n}\left(\wh{\theta}_{n}-\theta\right)\right).
\end{equation}
Then, it follows from \eqref{decomposition1} and \eqref{IAF} that, for all $n\geq0$,
\begin{equation}
\label{decompo2}
\wh{\theta}_{n+1}-\theta=\alpha_{n}\left(\wh{\theta}_{n}-\theta\right)
+\gamma_{n+1}\left(\wh{T}_{n+1}-\phi\left(\wh{\theta}_{n}\right)\right)
+\gamma_{n+1}r_{n}+d_{n+1},
\end{equation}
where
\begin{equation}
\label{defalpha}
\alpha_{n}=1+\gamma_{n+1}\phi^{\prime}\left(\theta\right),
\end{equation}
and
\begin{equation}
\label{defrn}
r_{n}=\frac{\left(\wh{\theta}_{n}-\theta\right)^{2}}{2}\phi^{\prime\prime}\left(\theta+\xi_{n}\left(\wh{\theta}_{n}-\theta\right)\right).
\end{equation}
In addition, for all $n\geq0$,
\begin{equation}
\label{TchapT}
\wh{T}_{n+1}=T_{n+1}+A_{n+1}+B_{n+1}+C_{n+1},
\end{equation}
where $T_{n+1}$ is given by \eqref{ancienT} and
\begin{equation}
\label{defAn}
A_{n+1}=\sin\left(2\pi(X_{n+1}-\theta)\right)Y_{n+1}\left(\frac{g(X_{n+1})-\wh{g}_{n}(X_{n+1})}{g^{2}(X_{n+1})}\right),
\end{equation}
\begin{equation}
\label{defBn}
B_{n+1}=\left(\sin\left(2\pi(X_{n+1}-\wh{\theta}_{n})\right)-\sin\left(2\pi(X_{n+1}-\theta)\right)\right)Y_{n+1}\left(\frac{g(X_{n+1})-\wh{g}_{n}(X_{n+1})}{g^{2}(X_{n+1})}\right)
\end{equation}
and
\begin{equation}
\label{defCn}
C_{n+1}=\sin\left(2\pi(X_{n+1}-\wh{\theta}_{n})\right)Y_{n+1}\left(\frac{\left(g(X_{n+1})-\wh{g}_{n}(X_{n+1})\right)^2}{g^{2}(X_{n+1})\wh{g}_{n}(X_{n+1})}\right).
\end{equation}
Then, denoting $D_{n+1}=A_{n+1}+B_{n+1}+C_{n+1}$, it follows from \eqref{TchapT}, that for all $n\geq0$,
\begin{equation}
\label{TchapT2}
\wh{T}_{n+1}=T_{n+1}+\mathbb{E}\left[A_{n+1}|\mathcal{F}_{n}\right]+D_{n+1}-\mathbb{E}\left[D_{n+1}|\mathcal{F}_{n}\right]+\mathbb{E}\left[B_{n+1}|\mathcal{F}_{n}\right]+\mathbb{E}\left[C_{n+1}|\mathcal{F}_{n}\right].
\end{equation}
Finally, we deduce from \eqref{decompo2} that, for all $n\geq0$,
\begin{eqnarray}
\nonumber
\wh{\theta}_{n+1}-\theta &=&\left(\alpha_{n}+\gamma_{n+1}\frac{\mathbb{E}\left[B_{n+1}|\mathcal{F}_{n}\right]}{\wh{\theta}_{n}-\theta}+\gamma_{n+1}\frac{r_{n}}{\wh{\theta}_{n}-\theta}\right)\left(\wh{\theta}_{n}-\theta\right)
+\gamma_{n+1}\left(T_{n+1}+\mathbb{E}\left[A_{n+1}|\mathcal{F}_{n}\right]-\phi\left(\wh{\theta}_{n}\right)\right)\\
\nonumber
&+&\gamma_{n+1}\left(D_{n+1}-\mathbb{E}\left[D_{n+1}|\mathcal{F}_{n}\right]\right)
+\gamma_{n+1}\mathbb{E}\left[C_{n+1}|\mathcal{F}_{n}\right]
+d_{n+1}.
\end{eqnarray}
An immediate recurrence in the previous equality leads to, for all $n\geq1$,
\begin{eqnarray}
\nonumber
\sqrt{n}\left(\wh{\theta}_{n}-\theta\right)&=&n^{1/2}\beta_{n-1}\left(\wh{\theta}_{1}-\theta\right)
+n^{1/2}\beta_{n-1}S_{n-1}+n^{1/2}\beta_{n-1}R^{1}_{n-1}\\
\label{decompofinal}
&+&n^{1/2}\beta_{n-1}R^{2}_{n-1}+n^{1/2}\beta_{n-1}R^{3}_{n-1},
\end{eqnarray}
where
\begin{equation}
\label{defBeta}
\beta_{n-1}=\prod_{k=1}^{n-1}\left(\alpha_{k}+\gamma_{k+1}\frac{\mathbb{E}\left[B_{k+1}|\mathcal{F}_{k}\right]}{\wh{\theta}_{k}-\theta}+\gamma_{k+1}\frac{r_{k}}{\wh{\theta}_{k}-\theta}\right),
\end{equation}
\begin{equation}
\label{defSn}
S_{n-1}=\sum_{k=1}^{n-1}\frac{\gamma_{k+1}}{\beta_{k}}\left(T_{k+1}-\phi\left(\wh{\theta}_{k}\right)+\mathbb{E}\left[A_{k+1}|\mathcal{F}_{k}\right]\right),
\end{equation}
\begin{equation}
\label{defRn1}
R^{1}_{n-1}=\sum_{k=1}^{n-1}\frac{\gamma_{k+1}}{\beta_{k}}\left(D_{k+1}-\mathbb{E}\left[D_{k+1}|\mathcal{F}_{k}\right]\right),
\end{equation}
\begin{equation}
\label{defRn2}
R^{2}_{n-1}=\sum_{k=1}^{n-1}\frac{\gamma_{k+1}}{\beta_{k}}\mathbb{E}\left[C_{k+1}|\mathcal{F}_{k}\right],
\end{equation}
and
\begin{equation}
\label{defRn}
R^{3}_{n-1}=\sum_{k=1}^{n-1}\frac{1}{\beta_{k}}d_{k+1}.
\end{equation}
We begin by finding a simple equivalent of the sequence $\beta_{n-1}$ given by \eqref{defBeta}. Firstly, one have, for all $n\geq0$,
$$
\mathbb{E}\left[B_{n+1}|\mathcal{F}_{n}\right]=\int_{-1/2}^{1/2}\frac{\left(\sin\left(2\pi(x-\wh{\theta}_{n})\right)-\sin\left(2\pi(x-\theta)\right)\right)}{g(x)}f(x-\theta)\left(g(x)-\wh{g}_{n}(x)\right)dx.
$$
Hence, as $f$ is bounded and $g$ does not vanish on $[-1/2;1/2]$, we deduce that there exists a constant $C>0$ such that, for all $n\geq1$,
\begin{eqnarray*}
\left|\frac{\mathbb{E}\left[B_{n+1}|\mathcal{F}_{n}\right]}{\wh{\theta}_{n}-\theta}\right|
&\leq&{\int_{-1/2}^{1/2}\left\vert\frac{\left(\sin\left(2\pi(x-\wh{\theta}_{n})\right)-\sin\left(2\pi(x-\theta)\right)\right)}{\wh{\theta}_{n}-\theta}\frac{f(x-\theta)}{g(x)}\left(g(x)-\wh{g}_{n}(x)\right)\right\vert dx}\\
&\leq&{C\sup_{-1/2\leq x\leq 1/2}\left|g(x)-\wh{g}_{n}(x)\right|}.
\end{eqnarray*}
In particular, thanks to Lemma \ref{supgnx} and with $\gamma_{n}=1/n$, 
\begin{equation}
\label{cvBn}
\sum_{n=1}^{+\infty}\gamma_{n+1}\left|\frac{\mathbb{E}\left[B_{n+1}|\mathcal{F}_{n}\right]}{\wh{\theta}_{n}-\theta}\right|<+\infty\hspace{8mm}\textnormal{a.s.}
\end{equation}
Secondly, by definition of $\phi$ given by \eqref{deffunctionphi}, one have for all $x\in{\mathbb{R}}$,
$$
\phi^{\prime\prime}(x)=-4\pi^2 f_{1}\sin\left(2\pi\left(\theta-x\right)\right).
$$
Hence, one can find a constant $C>0$ such that for all $n\geq0$,
$$
\left\vert\frac{\wh{\theta}_{n}-\theta}{2}\phi^{\prime\prime}\left(\theta+\xi_{n}\left(\wh{\theta}_{n}-\theta\right)\right)\right\vert\leq{C \left(\wh{\theta}_{n}-\theta\right)^2}.
$$
Thus, for all $n\geq1$,
$$
\sum_{n=1}^{+\infty}\gamma_{n+1}\left\vert\frac{\wh{\theta}_{n}-\theta}{2}\phi^{\prime\prime}\left(\theta+\xi_{n}\left(\wh{\theta}_{n}-\theta\right)\right)\right\vert\leq{C \sum_{n=1}^{+\infty}\gamma_{n+1}\left(\wh{\theta}_{n}-\theta\right)^2}.
$$
Consequently, with $\gamma_{n}=1/n$, one deduce from Lemma \ref{meanthetan} that, if $0<\alpha<1/2$, on $\{T_{m}=+\infty\}$, the sequence
$$
\sum_{k=1}^{n}\gamma_{k+1}\left\vert\frac{\wh{\theta}_{k}-\theta}{2}\phi^{\prime\prime}\left(\theta+\xi_{k}\left(\wh{\theta}_{k}-\theta\right)\right)\right\vert
$$
converges a.s. Moreover, as $\cup_{m=1}^{+\infty}\{T_{m}=+\infty\}$ is a set of probability $1$, it follows that, for $0<\alpha<1/2$,
\begin{equation}
\label{cvrn}
\sum_{n=1}^{+\infty}\gamma_{n+1}\left\vert\frac{\wh{\theta}_{n}-\theta}{2}\phi^{\prime\prime}\left(\theta+\xi_{n}\left(\wh{\theta}_{n}-\theta\right)\right)\right\vert<+\infty~~~~~
\textnormal{a.s.}
\end{equation}
Finally, one infer from \eqref{defBeta} together with \eqref{cvBn} and \eqref{cvrn} that there exists a constant $c>0$ such that
$$
\beta_{n-1}\sim c\prod_{k=1}^{n-1}\alpha_{k},
$$
that is to say
\begin{equation}
\label{asympbeta}
\beta_{n-1}\sim c n^{q},
\end{equation}
where $q:=\phi^{\prime}\left(\theta\right)=-2\pi f_{1}<-1/2.$
%\begin{eqnarray*}
%\log(\beta_{n-1})&=&\sum_{k=1}^{n-1}\log\left[1+\gamma_{k+1}\left(\phi^{\prime}(\theta)+\frac{\mathbb{E}\left[B_{k+1}|\mathcal{F}_{k}\right]}{\wh{\theta}_{k}-\theta}+\frac{\wh{\theta}_{k}-\theta}{2}\phi^{\prime\prime}\left(\theta+\xi_{k}\left(\wh{\theta}_{k}-\theta\right)\right)\right)\right],\\
%&=&\sum_{k=1}^{n-1}\gamma_{k+1}\left(\phi^{\prime}(\theta)+\frac{\mathbb{E}\left[B_{k+1}|\mathcal{F}_{k}\right]}{\wh{\theta}_{k}-\theta}+\frac{\wh{\theta}_{k}-\theta}{2}\phi^{\prime\prime}\left(\theta+\xi_{k}\left(\wh{\theta}_{k}-\theta\right)\right)\right)
%+\mathcal{O}\left(\sum_{k=1}^{n-1}\gamma_{k+1}^{2}\right),\\
%&=&\phi^{\prime}(\theta)\sum_{k=1}^{n-1}\gamma_{k+1}+\sum_{k=1}^{n-1}\gamma_{k+1}\frac{\mathbb{E}\left[B_{k+1}|\mathcal{F}_{k}\right]}{\wh{\theta}_{k}-\theta}
%+\sum_{k=1}^{n-1}\gamma_{k+1}\frac{\wh{\theta}_{k}-\theta}{2}\phi^{\prime\prime}\left(\theta+\xi_{k}\left(\wh{\theta}_{k}-\theta\right)\right)
%+\mathcal{O}\left(\sum_{k=1}^{n-1}\gamma_{k+1}^{2}\right)\\
%&=&\phi^{\prime}(\theta)\sum_{k=1}^{n-1}\gamma_{k+1}+C_{n}.
%\end{eqnarray*}
%Hence,
%$$
%\beta_{n-1}=\exp\left(\phi^{\prime}(\theta)\sum_{k=1}^{n-1}\gamma_{k+1}\right)\exp\left(C_{n}\right).
%$$
Finally, for $0<\alpha<1/2$, we infer from \eqref{asympbeta} that \eqref{decompofinal} is equivalent to
\begin{equation}
\label{decompofinal2}
\sqrt{n}\left(\wh{\theta}_{n}-\theta\right)=n^{1/2+q}\left(\wh{\theta}_{1}-\theta\right)
+n^{1/2+q}S_{n-1}+n^{1/2+q}R^{1}_{n-1}+n^{1/2+q}R^{2}_{n-1}+n^{1/2+q}R^{3}_{n-1},
\end{equation}
where in each sum $S_{n-1}$, $R^{1}_{n-1}$, $R^{2}_{n-1}$ and $R^{3}_{n-1}$, $\frac{\gamma_{k+1}}{\beta{k}}$ is replaced by $k^{-1-q}$.\\
Now, we are going to analyze the asymptotic behaviour of each term of \eqref{decompofinal2}.
Firstly, as $q<-1/2$, we immediately have
\begin{equation}
\label{cvtheta1}
n^{1/2+q}\left(\wh{\theta}_{1}-\theta\right)=o(1)\hspace{6mm}\textnormal{a.s.}
\end{equation}
%Secondly, following the standard proof of the central limit theorem for Robbins-Monro \cite{Duflo97}, one obtain that
%\begin{equation}
%\label{cvRn3}
%n^{1/2+q}R^{3}_{n-1}=o(1)\hspace{6mm}\textnormal{a.s.}
%\end{equation}
Secondly, the sequence $(R_{n-1}^{3})$ is almost surely finite since the number of times that 
the random variable $\wh{\theta}_{n}+\text{sign}(f_1)\gamma_{n+1}\wh{T}_{n+1}$ goes outside of $C$ 
is almost surely finite (Theorem \ref{thmascvg}). Hence, as $q<-1/2$,
\begin{equation}
\label{cvRn4}
n^{1/2+q}R^{3}_{n-1}=\mathcal{O}(n^{1/2+q})=o(1)\hspace{6mm}\textnormal{a.s.}
\end{equation}
Thirdly, the sequence $(R_{n}^{1})$ is a square integrable martingale whose increasing process is given, for all $n\geq1$, by
\begin{eqnarray}
\nonumber
\langle R^{1}\rangle_{n-1}&=&\sum_{k=1}^{n-1}\frac{1}{k^{2+2q}}\left(\mathbb{E}\left[D_{k+1}^{2}|\mathcal{F}_{k}\right]-\mathbb{E}\left[D_{k+1}|\mathcal{F}_{k}\right]^{2}\right)\\
\label{majcrochetRn1}
&\leq&\sum_{k=1}^{n-1}\frac{1}{k^{2+2q}}\mathbb{E}\left[D_{k+1}^{2}|\mathcal{F}_{k}\right]\hspace{6mm}\textnormal{a.s.}
\end{eqnarray}
In addition, as $D_{k+1}=A_{k+1}+B_{k+1}+C_{k+1}$, one obtain that, for all $1\leq{k}\leq{n-1}$,
$$
\mathbb{E}\left[D_{k+1}^{2}|\mathcal{F}_{k}\right]\leq{4\left(\mathbb{E}\left[A_{k+1}^{2}|\mathcal{F}_{k}\right]
+\mathbb{E}\left[B_{k+1}^{2}|\mathcal{F}_{k}\right]+\mathbb{E}\left[C_{k+1}^{2}|\mathcal{F}_{k}\right]\right)}
$$
Hence, since $f$ is bounded, $g$ does not vanish on its support and $(\varepsilon_{n})$ has a moment of order $2$, one immediately deduce from \eqref{defAn}, \eqref{defBn} and \eqref{defCn} that
$$
\mathbb{E}\left[D_{n+1}^{2}|\mathcal{F}_{n}\right]=\mathcal{O}\left(\sup_{-1/2\leq x\leq 1/2}\left(g(x)-\wh{g}_{n}(x)\right)^{2}\right)\hspace{6mm}\textnormal{a.s.}
$$
Then, it follows from Lemma \ref{supgnx} and \eqref{majcrochetRn1} that, for all $n\geq1$,
$$
\langle R^{1}\rangle_{n-1}=\mathcal{O}\left(\sum_{k=1}^{n-1}\frac{1}{k^{2+2q}}(k^{-4\alpha}+k^{2\beta-2})\right)
=\mathcal{O}\left(\sum_{k=1}^{n-1}\frac{1}{k^{2+2q+4\alpha}}+\frac{1}{k^{4+2q-2\beta}}\right)
\hspace{6mm}\textnormal{a.s.}
$$
If $2+2q+4\alpha>1$ and $4+2q-2\beta>1$, $\langle R^{1}\rangle_{n-1}$ converges a.s., and then $ R^{1}_{n-1}$ converges a.s. \\
If $2+2q+4\alpha=1$ and $4+2q-2\beta=1$, then
$
\langle R^{1}\rangle_{n-1}=\mathcal{O}\left(\log(n)\right).
$
We then deduce from the strong law of large numbers for martingales that $ R^{1}_{n-1}=o(\log(n))$ a.s.\\
If 
$2+2q+4\alpha<1$ and $4+2q-2\beta<1$, then
\begin{equation*}
\langle R^{1}\rangle_{n-1}=
\mathcal{O}\left(\frac{1}{n^{1+2q+4\alpha}}+\frac{1}{n^{3+2q-2\beta}}\right)\hspace{6mm}\textnormal{a.s.}
\end{equation*}
Then, we infer from the strong law of large numbers for martingales given by Theorem 1.3.15 of \cite{Duflo97} that
for any $\gamma>0$, 
$$
R^{1}_{n-1}=\mathcal{O}\left(\frac{\log(n)^{1/2+\gamma/2}}{n^{1/2+q+2\alpha}}+\frac{\log(n)^{1/2+\gamma/2}}{n^{3/2+q-\beta}}\right)\hspace{6mm}\textnormal{a.s.}
$$
In the three cases, as $q<-1/2$, one can conclude that
\begin{equation}
\label{cvRn1}
n^{1/2+q}R^{1}_{n-1}=o(1)\hspace{6mm}\textnormal{a.s.}
\end{equation}
From the same way, one deduce from \eqref{defCn} that
$$
\mathbb{E}\left[C_{n+1}^{2}|\mathcal{F}_{n}\right]=\mathcal{O}\left(\sup_{-1/2\leq x\leq 1/2}\left(g(x)-\wh{g}_{n}(x)\right)^{2}\right)\hspace{6mm}\textnormal{a.s.}
$$
Hence, it follows from Lemma \ref{supgnx} that 
$$
R^{2}_{n-1}=\mathcal{O}\left(\sum_{k=1}^{n-1}\frac{1}{k^{1+q}}\sup_{-1/2\leq x\leq 1/2}\left|g(x)-\wh{g}_{k}(x)\right|^{2}\right)=\mathcal{O}\left(\sum_{k=1}^{n-1}\frac{1}{k^{1+q+4\alpha}}+\frac{1}{k^{3+q-2\beta}}\right)\hspace{6mm}\textnormal{a.s.}
$$
Thus, if $1+q+4\alpha>1$ and $3+q-2\beta>1$, the sequence $(R^{2}_{n-1})$ converges a.s. whereas if $1+q+4\alpha=1$ and $3+q-2\beta=1$, one obtain that
$$
R^{2}_{n-1}=\mathcal{O}\left(\log(n)\right)\hspace{6mm}\textnormal{a.s.}
$$
In the case where $1+q+4\alpha<1$ and $3+q-2\beta<1$, one deduce that
$$
R^{2}_{n-1}=\mathcal{O}\left(\frac{1}{n^{q+4\alpha}}+\frac{1}{n^{q+2-2\beta}}\right)\hspace{6mm}\textnormal{a.s.}
$$
Finally, in the three cases, one obtain that, if $\alpha>1/8$ and $\beta<3/4$,
\begin{equation}
\label{cvRn2}
n^{1/2+q}R^{2}_{n-1}=o(1)\hspace{6mm}\textnormal{a.s.}
\end{equation}
The hypothesis $\beta<3/4$ implies to take $\alpha<1/2$. Hence, one obtain from \eqref{decompofinal2} together with \eqref{cvtheta1}, \eqref{cvRn4}, \eqref{cvRn1} and \eqref{cvRn2}, that if $1/8<\alpha<1/2$,
\begin{equation}
\label{decompofinal3}
\sqrt{n}\left(\wh{\theta}_{n}-\theta\right)=n^{1/2+q}S_{n-1}+o(1)\hspace{6mm}\textnormal{a.s.}
\end{equation}
where we recall that, for all $n\geq1$,
\begin{equation}
\label{newSn-1}
S_{n-1}=\sum_{k=1}^{n-1}\frac{1}{k^{1+q}}\left(T_{k+1}-\phi\left(\wh{\theta}_{k}\right)+\mathbb{E}\left[A_{k+1}|\mathcal{F}_{k}\right]\right).
\end{equation}
However, one deduce from \eqref{deffunctionphi} and \eqref{ancienT} the decomposition, for $n\geq1$,
\begin{equation}
\label{decompMG}
\sum_{k=1}^{n-1}\frac{1}{k^{1+q}}\left(T_{k+1}-\phi\left(\wh{\theta}_{k}\right)\right)=M_{n-1}^{1}+M_{n-1}^{2}
\end{equation}
where
\begin{equation}
\label{defMn1}
M_{n-1}^{1}=\sum_{k=1}^{n-1}\frac{1}{k^{1+q}}\sin\left(2\pi(\theta-\wh{\theta}_{k})\right)\left(\frac{\cos\left(2\pi(X_{k+1}-\theta)\right)}{g(X_{k+1})}Y_{k+1}-f_{1}\right)
\end{equation}
and
\begin{equation}
\label{defMn2}
M_{n-1}^{2}=\sum_{k=1}^{n-1}\frac{1}{k^{1+q}}\cos\left(2\pi(\theta-\wh{\theta}_{k})\right)\frac{\sin\left(2\pi(X_{k+1}-\theta)\right)}{g(X_{k+1})}Y_{k+1}.
\end{equation}
The sequence $(M_{n-1}^{1})$ is a square integrable martingale whose increasing process is given, for all $n\geq1$, by
\begin{equation}
\label{defcrochetM1n}
\langle M^{1}\rangle_{n-1}=\sum_{k=1}^{n-1}\frac{1}{k^{2+2q}}\sin\left(2\pi(\theta-\wh{\theta}_{k})\right)^{2}
\mathbb{E}\left[\left(\frac{\cos\left(2\pi(X_{k+1}-\theta)\right)}{g(X_{k+1})}Y_{k+1}-f_{1}\right)^{2}|\mathcal{F}_{k}\right].
\end{equation}
Moreover, since $f$ is bounded, $g$ does not vanish on its support and $(\varepsilon_{k})$ has a moment of order $2$, one immediately obtain from \eqref{defcrochetM1n} that
$$
\langle M^{1}\rangle_{n-1}=\mathcal{O}\left(\sum_{k=1}^{n-1}\frac{1}{k^{2+2q}}\sin\left(2\pi(\theta-\wh{\theta}_{k})\right)^{2}\right)\hspace{6mm}\textnormal{a.s.}
$$
In addition, since $(\wh{\theta}_{n})$ converges almost surely to $\theta$ and $\sum_{k=1}^{n-1}\frac{1}{k^{2+2q}}=\mathcal{O}\left(n^{-1-2q}\right)$, we finally deduce that
$$
\langle M^{1}\rangle_{n-1}=o(n^{-1-2q})\hspace{6mm}\textnormal{a.s.}
$$
In addition, since $\varepsilon_{n}$ admits a moment of order $>2$, the sequence $(M_{n-1}^{1})$ checks a Lyapunov condition. Consequently, one can conclude from the central limit theorem for martingales given e.g. by Corollary 2.1.10 of \cite{Duflo97} that 
\begin{equation}
\label{cvMn1}
n^{1/2+q}M_{n-1}^{1}=o_{\mathbb{P}}(1).
\end{equation}
Hence, it follows from \eqref{decompMG} and \eqref{cvMn1} that
\begin{equation}
\label{decompMG2}
n^{1/2+q}\sum_{k=1}^{n-1}\frac{1}{k^{1+q}}\left(T_{k+1}-\phi\left(\wh{\theta}_{k}\right)\right)=M_{n-1}^{2}+o_{\mathbb{P}}(1),
\end{equation}
where $M_{n-1}^{2}$ is given by \eqref{defMn2}. Furthermore, since $(\wh{\theta}_{n})$ converges almost surely to $\theta$, one immediately deduce that the asymptotic behaviour of $M_{n}^{2}$ is the same as the one of the sequence $M_{n-1}^{3}$, given for all $n\geq1$, by
\begin{equation}
\label{defMn3}
M_{n-1}^{3}=\sum_{k=1}^{n-1}\frac{1}{k^{1+q}}\frac{\sin\left(2\pi(X_{k+1}-\theta)\right)}{g(X_{k+1})}Y_{k+1}.
\end{equation}
Finally, one obtain from \eqref{newSn-1} and \eqref{decompMG2} that, for $n\geq1$,
\begin{equation}
\label{decompMG3}
n^{1/2+q}S_{n-1}=n^{1/2+q}M_{n-1}^{3}+n^{1/2+q}\sum_{k=1}^{n-1}\frac{1}{k^{1+q}}\mathbb{E}\left[A_{k+1}|\mathcal{F}_{k}\right]+o_{\mathbb{P}}(1),
\end{equation}
where $M_{n-1}^{3}$ is given by \eqref{defMn3}.
In addition, \eqref{defAn} and the symmetry of $f$ leads to, for all $n\geq1$,
\begin{eqnarray}
\nonumber
\mathbb{E}\left[A_{n+1}|\mathcal{F}_{n}\right]&=&\int_{-1/2}^{1/2}\sin\left(2\pi(x-\theta)\right)f(x-\theta)\left(\frac{g(x)-\wh{g}_{n}(x)}{g(x)}\right)dx\\
\label{equationA}
&=&-\int_{-1/2}^{1/2}a(x)\wh{g}_{n}(x)dx.
\end{eqnarray}
where for all $-1/2\leq x \leq 1/2$,
\begin{equation}
\label{deffunctiona}
a(x)=\frac{\sin\left(2\pi(x-\theta)\right)f(x-\theta)}{g(x)}.
\end{equation}
Hence, we deduce from \eqref{PRrecursif} and the change of variables $u=\frac{X_{i}-x}{h_{i}}$, that
\begin{eqnarray}
\nonumber
\int_{-1/2}^{1/2}a(x)\wh{g}_{n}(x)dx&=&\frac{1}{n}\sum_{i=1}^{n}\int_{-1/2}^{1/2}a(x)\frac{1}{h_{i}}K\left(\frac{X_{i}-x}{h_{i}}\right)dx\\
\label{sumgn}
&=&\frac{1}{n}\sum_{i=1}^{n}\int_{\frac{-1/2-X_{i}}{h_{i}}}^{\frac{1/2-X_{i}}{h_{i}}}a(X_{i}+h_{i}u)K(u)du.
%&=&\frac{1}{n}\sum_{i=1}^{n}a(X_{i})\int_{\frac{-1/2-X_{i}}{h_{i}}}^{\frac{1/2-X_{i}}{h_{i}}}K(u)du
%+\frac{1}{n}\sum_{i=1}^{n}h_{i}a^{\prime}(X_{i})\int_{\frac{-1/2-X_{i}}{h_{i}}}^{\frac{1/2-X_{i}}{h_{i}}}uK(u)du
%+\mathcal{O}\left(\frac{1}{n}\sum_{i=1}^{n}h_{i}^{2}\right)\\
%&=&\frac{1}{n}\sum_{i=1}^{n}a(X_{i})\int_{\frac{-1/2-X_{i}}{h_{i}}}^{\frac{1/2-X_{i}}{h_{i}}}K(u)du
%+\frac{1}{n}\sum_{i=1}^{n}h_{i}a^{\prime}(X_{i})\int_{\frac{-1/2-X_{i}}{h_{i}}}^{\frac{1/2-X_{i}}{h_{i}}}uK(u)du
%+\mathcal{O}\left(n^{-2\alpha}\right)\\
\end{eqnarray}
Moreover, since $X_{n}\in{[-1/2;1/2]}$, one have
\begin{equation}
\label{limps}
\frac{-1/2-X_{n}}{h_{n}}\underset{n\rightarrow{+\infty}}\longrightarrow{-\infty}\hspace{4mm}\textnormal{a.s.}
\hspace{5mm}\textnormal{ and }\hspace{5mm}
\frac{1/2-X_{n}}{h_{n}}\underset{n\rightarrow{+\infty}}\longrightarrow{+\infty}\hspace{4mm}\textnormal{a.s.}
\end{equation}
From now, denote by $[-A;A]$ the support of $K$. Then, we have, for all $n\geq1$,
\begin{eqnarray*}
\int_{\frac{-1/2-X_{n}}{h_{n}}}^{\frac{1/2-X_{n}}{h_{n}}}a(X_{n}+h_{n}u)K(u)du&=&\int_{-A}^{A}a(X_{n}+h_{n}u)K(u)du\\
&+&\int_{\frac{-1/2-X_{n}}{h_{n}}}^{-A}a(X_{n}+h_{n}u)K(u)du+\int_{A}^{\frac{1/2-X_{n}}{h_{n}}}a(X_{n}+h_{n}u)K(u)du.
\end{eqnarray*}
Then, we deduce from the previous equality and \eqref{sumgn} and \eqref{limps} that
\begin{equation}
\label{sumgn2}
\int_{-1/2}^{1/2}a(x)\wh{g}_{n}(x)dx=\frac{1}{n}\sum_{i=1}^{n}\int_{-A}^{A}a(X_{i}+h_{i}u)K(u)du+\mathcal{O}\left(\frac{1}{n}\right)\hspace{6mm}\textnormal{a.s.}
\end{equation}
Moreover, as $f$ and $g$ are two times differentiable, one can write a Taylor expansion of the function $a$ given by
\eqref{deffunctiona}. More precisely, there exists $0<\xi_{i}<1$ such that a.s.
$$
a(X_{i}+h_{i}u)=a(X_{i})+h_{i}ua^{\prime}(X_{i})+\frac{h_{i}^{2}}{2}u^{2}a^{\prime\prime}(X_{i}+\xi_{i}h_{i}u).
$$
Consequently, since $K$ is a symmetric density and $f$ and $g$ have bounded derivates, one infer from the previous equality and \eqref{sumgn2} that, as $\alpha<1/2$,
\begin{eqnarray}
\nonumber
\int_{-1/2}^{1/2}a(x)\wh{g}_{n}(x)dx&=&\frac{1}{n}\sum_{i=1}^{n}a(X_{i})+\mathcal{O}\left(\frac{1}{n}\sum_{i=1}^{n}h_{i}^{2}\right)+\mathcal{O}\left(\frac{1}{n}\right)\\
\label{sumgn3}
&=&\frac{1}{n}\sum_{i=1}^{n}a(X_{i})+\mathcal{O}\left(\frac{1}{n^{2\alpha}}\right)\hspace{6mm}\textnormal{a.s.}
\end{eqnarray}
Finally, one deduce from \eqref{equationA} together with \eqref{deffunctiona} and \eqref{sumgn3} that, as $\alpha<1/2$,
\begin{equation}
\label{equationAfin}
\mathbb{E}\left[A_{n+1}|\mathcal{F}_{n}\right]=-\frac{1}{n}\sum_{i=1}^{n}\frac{\sin\left(2\pi(X_{i}-\theta)\right)f(X_{i}-\theta)}{g(X_{i})}+\mathcal{O}\left(\frac{1}{n^{2\alpha}}\right)\hspace{6mm}\textnormal{a.s.}
\end{equation}
Moreover, if $1+q+2\alpha<1$, then for $\alpha>1/4$,
$$
n^{1/2+q}\sum_{k=1}^{n-1}\frac{1}{k^{1+q+2\alpha}}=o(1).
$$
If $1+q+2\alpha\geq1$, it is obvious that
$$
n^{1/2+q}\sum_{k=1}^{n-1}\frac{1}{k^{1+q+2\alpha}}=o(1).
$$
Hence, it follows from \eqref{equationAfin} and the two previous equality that
\begin{equation}
\label{equationAfin2}
n^{1/2+q}\sum_{k=1}^{n-1}\frac{1}{k^{1+q}}\mathbb{E}\left[A_{k+1}|\mathcal{F}_{k}\right]=-n^{1/2+q}\sum_{k=1}^{n-1}\frac{1}{k^{2+q}}\sum_{i=1}^{k}\frac{\sin\left(2\pi(X_{i}-\theta)\right)f(X_{i}-\theta)}{g(X_{i})}+o(1)\hspace{6mm}\textnormal{a.s.}
\end{equation}
Finally, the conjunction of \eqref{decompofinal3} together with \eqref{decompMG3} and \eqref{equationAfin2} let us to conclude that, for $1/4<\alpha<1/2$,
\begin{eqnarray}
\nonumber
\sqrt{n}\left(\wh{\theta}_{n}-\theta\right)&=&n^{1/2+q}M_{n-1}^3+n^{1/2+q}\sum_{k=1}^{n-1}\frac{1}{k^{2+q}}\sum_{i=1}^{k}\frac{\sin\left(2\pi(X_{i}-\theta)\right)f(X_{i}-\theta)}{g(X_{i})}+o_{\mathbb{P}}(1)\\
\label{decompofinal4}
&=&n^{1/2+q}\left(\sum_{k=1}^{n-1}\frac{1}{k^{1+q}}v(X_{k+1})Y_{k+1}-\sum_{k=1}^{n-1}\frac{1}{k^{2+q}}\sum_{i=1}^{k}v(X_{i})f(X_{i}-\theta)\right)+o_{\mathbb{P}}(1).
\end{eqnarray}
%Finally, one can conclude from \eqref{decompofinal3} together with \eqref{newSn-1}, \eqref{decompMG3} and \eqref{equationAfin} and the two previous equality than, for $1/2>\alpha>1/4$, we have
%\begin{equation}
%\label{decompofinal4}
%\sqrt{n}\left(\wh{\theta}_{n}-\theta\right)=n^{1/2+q}\sum_{k=1}^{n-1}\frac{1}{k^{1+q}}v(X_{k+1})Y_{k+1}-\sum_{k=1}^{n-1}\frac{1}{k^{2+q}}\sum_{i=1}^{k}v(X_{i})f(X_{i}-\theta)+o_{\mathbb{P}}(1).
%\end{equation}
where for all $-1/2\leq{x}\leq{1/2}$, 
\begin{equation}
\label{deffunctionv}
v(x)=\frac{\sin\left(2\pi(x-\theta)\right)}{g(x)}.
\end{equation}
From now, denote for all $1\leq{k}\leq{n-1}$,
\begin{equation}
\label{defvectorU}
U_{k}=\begin{pmatrix}
X_{k}\\
\varepsilon_{k}
\end{pmatrix}.
\end{equation}
In the following, we note $U$ a random variable independent of the sequence $(U_{n})$ and with the same law as $U_{n}$. We obtain from \eqref{decompofinal4} that, for $1/4<\alpha<1/2$,
\begin{equation}
\label{decompofinal5}
\sqrt{n}\left(\wh{\theta}_{n}-\theta\right)=n^{1/2+q}\left(\sum_{k=1}^{n-1}\frac{1}{k^{1+q}}\psi_{1}(U_{k+1})-\sum_{k=1}^{n-1}\frac{1}{k^{2+q}}\sum_{i=1}^{k}\psi_{2}(U_{i})\right)+o_{\mathbb{P}}(1),
\end{equation}
where the function $\psi_{1}$ and $\psi_{2}$ are such that, for all $x=(x_{1},x_{2})\in{\mathbb{R}^2}$,
\begin{equation}
\label{deffunctionspsi}
\psi_{1}(x)=v(x_{1})\left(f(x_{1}-\theta)+x_{2}\right)
\hspace{6mm}
\textnormal{and}
\hspace{6mm}
\psi_{2}(x)=v(x_{1})f(x_{1}-\theta).
\end{equation}
In addition,
\begin{equation}
\label{decompophi2}
\sum_{k=1}^{n-1}\frac{1}{k^{2+q}}\sum_{i=1}^{k}\psi_{2}\left(U_{i}\right)=
\sum_{i=1}^{n-1}\sum_{k=i}^{n-1}\frac{1}{k^{2+q}}\psi_{2}\left(U_{i}\right)=
\sum_{i=1}^{n-1}s_{i}^{n-1}\psi_{2}\left(U_{i}\right)
\end{equation}
where for $1\leq{i}\leq{n-1}$,
\begin{equation}
\label{defsi}
s_{i}^{n-1}=\sum_{k=i}^{n-1}\frac{1}{k^{2+q}}.
\end{equation}
Thus, it follows from \eqref{decompophi2} that for $n\geq3$,
\begin{eqnarray}
\nonumber
\sum_{k=1}^{n-1}\frac{1}{k^{1+q}}\psi_{1}(U_{k+1})-\sum_{k=1}^{n-1}\frac{1}{k^{2+q}}\sum_{i=1}^{k}\psi_{2}(U_{i})
&=&\sum_{k=1}^{n-2}\left(\frac{1}{k^{1+q}}\psi_{1}(U_{k+1})-s_{k+1}^{n-1}\psi_{2}(U_{k+1})\right)\\
\label{diffpsi}
&+&\frac{1}{(n-1)^{1+q}}\psi_{1}(U_{n})-s_{1}^{n-1}\psi_{2}(U_{1}).
\end{eqnarray}
Moreover, since $\psi_{1}(U_{n})$ and $\psi_{2}(U_{n})$ are integrable, we have
$$
n^{1/2+q}\left(\frac{1}{(n-1)^{1+q}}\psi_{1}(U_{n})-s_{1}^{n-1}\psi_{2}(U_{1})\right)=o_{\mathbb{P}}(1).
$$
One finally deduce from \eqref{decompofinal5} and \eqref{diffpsi} that for $1/4<\alpha<1/2$,
\begin{eqnarray}
\nonumber
\sqrt{n}\left(\wh{\theta}_{n}-\theta\right)&=&n^{1/2+q}\sum_{k=1}^{n-2}\left(\frac{1}{k^{1+q}}\psi_{1}(U_{k+1})-s_{k+1}^{n-1}\psi_{2}(U_{k+1})\right)+o_{\mathbb{P}}(1),\\
\label{decompofinal6}
&=&\left(1~~1\right)n^{1/2+q}\mathcal{M}_{n}
+o_{\mathbb{P}}(1),
\end{eqnarray}
where for $n\geq3$,
\begin{equation}
\label{defMcaln}
\mathcal{M}_{n}=\sum_{k=1}^{n-2}\begin{pmatrix}
a_{k}\psi_{1}\left(U_{k+1}\right)\\
b_{k}\psi_{2}\left(U_{k+1}\right),
\end{pmatrix}
\end{equation}
and for $1\leq{k}\leq{n-2}$,
\begin{equation}
\label{defanbn}
a_{k}=\frac{1}{k^{1+q}}~~~~~~\textnormal{and}~~~~~b_{k}=-s_{k+1}^{n-1}.
\end{equation}
Moreover, as $f$ is symmetric and $(\varepsilon_{n})$ is of mean $0$, it is not hard to see that, for $1\leq{k}\leq{n-2}$,
$$
\mathbb{E}\left[\begin{pmatrix}
a_{k}\psi_{1}\left(U_{k+1}\right)\\
b_{k}\psi_{2}\left(U_{k+1}\right)
\end{pmatrix}\Big|\mathcal{F}_{k}\right]=
\mathbb{E}\left[\begin{pmatrix}
a_{k}\psi_{1}\left(U\right)\\
b_{k}\psi_{2}\left(U\right)
\end{pmatrix}\right]=0.
$$
Consequently, the sequence $(\mathcal{M}_{n})$ is a vectorial martingale whose increasing process is the matrix defined for $n\geq3$, by
\begin{equation}
\label{defcrochetM}
\langle\mathcal{M}\rangle_{n}=\sum_{k=1}^{n-2}\begin{pmatrix}
a_{k}^2\sigma_{1}^2&a_{k}b_{k}\sigma_{1,2}\\
a_{k}b_{k}\sigma_{1,2}&b_{k}^{2}\sigma_{2}^2\\
\end{pmatrix}.
\end{equation}
where
$$
\sigma_{1}^2=\mathbb{E}\left[\psi_{1}(U)^2\right],~~~~~~~~~~~\sigma_{2}^2=\mathbb{E}\left[\psi_{2}(U)^2\right]
~~~~~~\textnormal{and}~~~~~\sigma_{1,2}=\mathbb{E}\left[\psi_{1}(U)\psi_{2}(U)\right].
$$
Moreover, for $q<-1/2$,
\begin{equation}
\label{limsuman}
n^{1+2q}\sum_{k=1}^{n-2}a_{k}^{2}=n^{1+2q}\sum_{k=1}^{n-2}\frac{1}{k^{2+2q}}\underset{n\rightarrow{+\infty}}\longrightarrow \int_{0}^{1}\frac{dx}{x^{2+2q}}=-\frac{1}{1+2q}.
\end{equation}
In addition, for $n\geq3$,
\begin{equation*}
\sum_{k=1}^{n-2}a_{k}b_{k}=-\sum_{k=1}^{n-2}\frac{1}{k^{1+q}}s_{k+1}^{n-1}
=-\sum_{k=1}^{n-2}\sum_{i=k+1}^{n-1}\frac{1}{k^{1+q}}\frac{1}{i^{2+q}}
=-\sum_{i=2}^{n-1}\sum_{k=1}^{i-1}\frac{1}{k^{1+q}}\frac{1}{i^{2+q}}.
\end{equation*}
Hence, one deduce from the Toeplitz lemma that, as $q<-1/2$,
\begin{equation}
\label{limsumanbn}
n^{1+2q}\sum_{k=1}^{n-2}a_{k}b_{k}=-\frac{1}{n}\sum_{i=2}^{n-1}\frac{n^{2+2q}}{i^{2+2q}}\frac{1}{i}\sum_{k=1}^{i-1}\frac{i^{1+q}}{k^{1+q}}\underset{n\rightarrow{+\infty}}\longrightarrow -\int_{0}^{1}\frac{dx}{x^{1+q}}\int_{0}^{1}\frac{dx}{x^{2+2q}}=-\frac{1}{1+2q}\frac{1}{q}.
\end{equation}
Finally,
\begin{equation*}
\sum_{k=1}^{n-2}b_{k}^2=\sum_{k=1}^{n-2}\left(s_{k+1}^{n-1}\right)^2
=\sum_{k=1}^{n-2}\left(\sum_{i=k+1}^{n-1}\frac{1}{i^{2+q}}\right)^2
=\sum_{k=1}^{n-2}\sum_{i=k+1}^{n-1}\sum_{j=k+1}^{n-1}\frac{1}{i^{2+q}}\frac{1}{j^{2+q}}.
\end{equation*}
%In addition,
%\begin{eqnarray}
%\nonumber
%\sum_{k=1}^{n-2}\sum_{i=k+1}^{n-1}\sum_{j=k+1}^{n-1}&=&\sum_{k=1}^{n-2}\sum_{i=k+1}^{n-1}\sum_{j=k+1}^{i}
%+\sum_{k=1}^{n-2}\sum_{i=k+1}^{n-2}\sum_{j=i+1}^{n-1}\\
%\nonumber
%&=&\sum_{i=2}^{n-1}\sum_{k=1}^{i-1}\sum_{j=k+1}^{i}
%+\sum_{i=2}^{n-2}\sum_{k=1}^{i-1}\sum_{j=i+1}^{n-1}\\
%\label{eq2bn}
%&=&\sum_{i=2}^{n-1}\sum_{j=2}^{i}\sum_{k=1}^{j-1}
%+\sum_{j=3}^{n-1}\sum_{i=2}^{j-1}\sum_{k=1}^{i-1}.
%\end{eqnarray}
Consequently,
\begin{equation*}
\underset{n\rightarrow{+\infty}}\lim n^{1/2+q}\sum_{k=1}^{n-2}b_{k}^2
=\underset{n\rightarrow{+\infty}}\lim n^{1/2+q}\left(\sum_{i=2}^{n-1}\frac{1}{i^{2+q}}\sum_{j=2}^{i}\frac{1}{j^{1+q}}
+\sum_{j=3}^{n-1}\frac{1}{j^{2+q}}\sum_{i=2}^{j-1}\frac{1}{i^{1+q}}\right).
\end{equation*}
Hence, it immediately follows from \eqref{limsumanbn} that
\begin{equation}
\label{limsumbn}
n^{1+2q}\sum_{k=1}^{n-2}b_{k}^{2}\underset{n\rightarrow{+\infty}}\longrightarrow2 \frac{1}{1+2q}\frac{1}{q}.
\end{equation}
Hence, we infer from \eqref{defcrochetM} together with \eqref{limsuman}, \eqref{limsumanbn} and \eqref{limsumbn} that
\begin{equation}
\label{limcrochetM}
n^{1+2q}\langle\mathcal{M}\rangle_{n}\underset{n\rightarrow{+\infty}}\longrightarrow-\frac{1}{q(1+2q)}\begin{pmatrix}
q\sigma_{1}^2&\sigma_{1,2}\\
\sigma_{1,2}&-2\sigma_{2}^2\\
\end{pmatrix}\hspace{6mm}\textnormal{a.s.}
\end{equation}
In order to apply the central limit theorem for vectorial martingales, it remains to check the Lindeberg condition, that is to say, for all $\varepsilon>0$, 
$$
n^{1+2q}\sum_{k=1}^{n-2}\mathbb{E}\left[\left|\left|V_{k+1}\right|\right|^{2}\mathrm{1}_{\left|\left|V_{k+1}\right|\right|\geq\varepsilon n^{-1/2-q}}\Big|\mathcal{F}_{k}\right]\overset{\mathbb{P}}{\underset{n\rightarrow{+\infty}}\longrightarrow} 0
$$
where $V_{k+1}=\begin{pmatrix}
a_{k}\psi_{1}\left(U_{k+1}\right)\\
b_{k}\psi_{2}\left(U_{k+1}\right)
\end{pmatrix}$.
However, for $\delta>0$, one have
\begin{eqnarray}
\nonumber
\sum_{k=1}^{n-2}\mathbb{E}\left[\left|\left|V_{k+1}\right|\right|^{2}\mathrm{1}_{\left|\left|V_{k+1}\right|\right|\geq\varepsilon n^{-1/2-q}}\Big|\mathcal{F}_{k}\right]&=&
\sum_{k=1}^{n-2}\mathbb{E}\left[\frac{\left|\left|V_{k+1}\right|\right|^{2+\delta}}{\left|\left|V_{k+1}\right|\right|}\mathrm{1}_{\left|\left|V_{k+1}\right|\right|\geq\varepsilon n^{-1/2-q}}\Big|\mathcal{F}_{k}\right]\\
\label{majoLindebergV}
&\leq&\frac{1}{\varepsilon}n^{1/2+q}\sum_{k=1}^{n-2}\mathbb{E}\left[\left|\left|V_{k+1}\right|\right|^{2+\delta}\Big|\mathcal{F}_{k}\right]
%&=&\frac{1}{\varepsilon}n^{1/2+q}\sum_{k=1}^{n-2}\mathbb{E}\left[\left(a_{k}^2\psi_{1}\left(U_{k+1}\right)^{2}
%+b_{k}^{2}\psi_{2}\left(U_{k+1}\right)^{2}\right)^{1+\delta/2}\Big|\mathcal{F}_{k}\right]\\
%&=&\frac{1}{\varepsilon}n^{1/2+q}\sum_{k=1}^{n-2}\mathbb{E}\left[\left(a_{k}^2\psi_{1}\left(U\right)^{2}
%+b_{k}^{2}\psi_{2}\left(U\right)^{2}\right)^{1+\delta/2}\right]\\
%&\leq&\frac{1}{\varepsilon}n^{1/2+q}2^{\delta/2}\sum_{k=1}^{n-2}\left(a_{k}^{2+\delta}
%\mathbb{E}\left[\psi_{1}\left(U\right)^{2+\delta}\right]+b_{k}^{2+\delta}
%\mathbb{E}\left[\psi_{2}\left(U\right)^{2+\delta}\right]\right).
\end{eqnarray}
Moreover, for $1\leq{k}\leq{n-2}$,
\begin{eqnarray}
\nonumber
\mathbb{E}\left[\left|\left|V_{k+1}\right|\right|^{2+\delta}\Big|\mathcal{F}_{k}\right]&=&\mathbb{E}\left[\left(a_{k}^2\psi_{1}\left(U_{k+1}\right)^{2}
+b_{k}^{2}\psi_{2}\left(U_{k+1}\right)^{2}\right)^{1+\delta/2}\Big|\mathcal{F}_{k}\right]\\
\label{majoVk+1}
&\leq&{
2^{\delta/2}\left(a_{k}^{2+\delta}
\mathbb{E}\left[\psi_{1}\left(U\right)^{2+\delta}\right]+b_{k}^{2+\delta}
\mathbb{E}\left[\psi_{2}\left(U\right)^{2+\delta}\right]\right)}.
\end{eqnarray}
Hence, as $(\varepsilon_{n})$ has a moment of order $>2$, one immediately deduce from \eqref{majoLindebergV} and \eqref{majoVk+1} that there
exists $\kappa_{\varepsilon}>0$ such that
\begin{equation}
\label{lindeberg1}
n^{1+2q}\sum_{k=1}^{n-2}\mathbb{E}\left[\left|\left|V_{k+1}\right|\right|^{2}\mathrm{1}_{\left|\left|V_{k+1}\right|\right|\geq\varepsilon n^{-1/2-q}}\Big|\mathcal{F}_{k}\right]\leq{\kappa_{\varepsilon} n^{3/2+3q}\sum_{k=1}^{n-2}\left(a_{k}^{2+\delta}+b_{k}^{2+\delta}\right)}.
\end{equation}
However,
\begin{equation}
\label{lindeberg2}
n^{3/2+3q}\sum_{k=1}^{n-2}\left(a_{k}^{2+\delta}+b_{k}^{2+\delta}\right)=n^{3/2+3q}\left(\sum_{k=1}^{n-2}\frac{1}{k^{(1+q)(2+\delta)}}+\sum_{k=1}^{n-2}\left(\sum_{i=k+1}^{n-1}\frac{1}{i^{2+q}}\right)^{2+\delta}\right).
\end{equation}
If $q=-1$, then
\begin{equation}
\label{q=-1}
n^{3/2+3q}\left(\sum_{k=1}^{n-2}\frac{1}{k^{(1+q)(2+\delta)}}+\sum_{k=1}^{n-2}\left(\sum_{i=k+1}^{n-1}\frac{1}{i^{2+q}}\right)^{2+\delta}\right)=
\mathcal{O}\left(n^{3/2-3}(n+n\log(n)^{2+\delta})\right)=o(1).
\end{equation}
If $q<-1$ then,
\begin{equation}
\label{q<-1}
n^{3/2+3q}\left(\sum_{k=1}^{n-2}\frac{1}{k^{(1+q)(2+\delta)}}+\sum_{k=1}^{n-2}\left(\sum_{i=k+1}^{n-1}\frac{1}{i^{2+q}}\right)^{2+\delta}\right)=
\mathcal{O}\left(\frac{n^{3/2+3q}}{n^{1+2q+q\delta+\delta}}\right)=\mathcal{O}\left(n^{1/2+q-\delta(q+1)}\right)=o(1).
\end{equation}
as soon as $\delta<\frac{1/2+q}{1+q}$. Moreover, in this case, $\frac{1/2+q}{1+q}>0$.\\
If $-1<q<-1/2$ there exists a constant $c>0$ such that
$$
\sum_{i=k+1}^{n-1}\frac{1}{i^{2+q}}\leq{\frac{c}{k^{1+q}}},
$$
which implies that
$$
n^{3/2+3q}\left(\sum_{k=1}^{n-2}\frac{1}{k^{(1+q)(2+\delta)}}+\sum_{k=1}^{n-2}\left(\sum_{i=k+1}^{n-1}\frac{1}{i^{2+q}}\right)^{2+\delta}\right)=
\mathcal{O}\left(n^{3/2+3q}\sum_{k=1}^{n-2}\frac{1}{k^{(1+q)(2+\delta)}}\right).
$$
In the case where $(1+q)(2+\delta)<1$ then,
\begin{equation}
\label{qautre1}
n^{3/2+3q}\sum_{k=1}^{n-2}\frac{1}{k^{(1+q)(2+\delta)}}=\mathcal{O}\left(\frac{n^{3/2+3q}}{n^{1+2q+q\delta+\delta}}\right)=\mathcal{O}\left(n^{1/2+q-\delta(q+1)}\right)=o(1)
\end{equation}
as soon as $\delta>\frac{1/2+q}{1+q}$, which is right because $\frac{1/2+q}{1+q}<0$. In the case where $(1+q)(2+\delta)\geq 1$, we clearly have
\begin{equation}
\label{qautre2}
n^{3/2+3q}\sum_{k=1}^{n-2}\frac{1}{k^{(1+q)(2+\delta)}}=o(1).
\end{equation}
Finally, one deduce from \eqref{majoLindebergV}, \eqref{majoVk+1} \eqref{lindeberg1} and \eqref{lindeberg2} together with \eqref{q=-1}, \eqref{q<-1}, \eqref{qautre1} and \eqref{qautre2} that
\begin{equation}
\label{lindebergfin}
n^{1+2q}\sum_{k=1}^{n-2}\mathbb{E}\left[\left|\left|V_{k+1}\right|\right|^{2}\mathrm{1}_{\left|\left|V_{k+1}\right|\right|\geq\varepsilon n^{-1/2-q}}\Big|\mathcal{F}_{k}\right]\underset{n\rightarrow{+\infty}}\longrightarrow 0\hspace{6mm}\textnormal{a.s.}
\end{equation}
that seems that the Lindeberg condition is satisfied. One can conclude from \eqref{limcrochetM} and from \eqref{lindebergfin} and the central limit theorem for martingales given e.g. in  Corollary 2.1.10 page 46 of \cite{Duflo97} that
\begin{equation}
\label{cvmathcalMn}
n^{1/2+q}\mathcal{M}_{n}\liml\mathcal{N}\left(0,\Gamma\right),
\end{equation}
where 
\begin{equation}
\label{defGamma}
\Gamma=-\frac{1}{q(1+2q)}\begin{pmatrix}
q\sigma_{1}^2&\sigma_{1,2}\\
\sigma_{1,2}&-2\sigma_{2}^2\\
\end{pmatrix}.
\end{equation}
Moreover, as $(\varepsilon_{n})$ is of mean $0$ and variance $\sigma^2$ and independent of $(X_{n})$, straightforward but tedious calculations lead to
$$
\sigma_{1}^2=\mathbb{E}\left[\psi_{1}(U)^2\right]=\int_{-1/2}^{1/2}\frac{\sin^{2}\left(2\pi(x-\theta)\right)}{g(x)}\left(f^{2}(x-\theta)+\sigma^{2}\right)dx,
$$
$$
\sigma_{2}^2=\mathbb{E}\left[\psi_{2}(U)^2\right]=\int_{-1/2}^{1/2}\frac{\sin^{2}\left(2\pi(x-\theta)\right)}{g(x)}f^{2}(x-\theta)dx,
$$
and 
$$
\sigma_{1,2}=\mathbb{E}\left[\psi_{1}(U)\psi_{2}(U)\right]=\sigma_{2}^2.
$$
Finally, \eqref{decompofinal6} together with \eqref{cvmathcalMn} and the Slutsky theorem let us to conclude that, if $1/4<\alpha<1/2$,
$$
\sqrt{n}\left(\wh{\theta}_{n}-\theta\right)\liml\mathcal{N}\left(0,-\frac{1}{1+2q}\sigma_{1}^2\right),
$$
which ends the proof of Theorem \ref{thmcltrm}.
$\hfill 
\mathbin{\vbox{\hrule\hbox{\vrule height1.5ex \kern.6em
\vrule height1.5ex}\hrule}}$
\nocite{*}
\bibliographystyle{acm}
\bibliography{g_unknown_new}

\end{document}